
\documentclass[a4paper]{amsart}
\usepackage{amssymb}
\usepackage{amscd}
\usepackage{amsthm}
\usepackage{amsmath}
\usepackage{mathtools}
\usepackage{latexsym}
\usepackage[all]{xy}
\usepackage[utf8]{inputenc}
\usepackage{enumitem}

\usepackage{tikz-cd}

\setlist[enumerate]{label={(\roman*)}}

\theoremstyle{plain}
\newtheorem{theorem}{Theorem}
\newtheorem{corollary}[theorem]{Corollary}
\newtheorem{lemma}[theorem]{Lemma}
\newtheorem{proposition}[theorem]{Proposition}

\theoremstyle{definition}
\newtheorem{definition}[theorem]{Definition}

\theoremstyle{remark}

\newtheorem{remark}{Remark}

\numberwithin{theorem}{section}
\usepackage{newtxtext}
\usepackage{newtxmath}

\DeclareMathAlphabet\urwscr{U}{urwchancal}{m}{n}%
\DeclareMathAlphabet\rsfscr{U}{rsfso}{m}{n}
\DeclareMathAlphabet\euscr{U}{eus}{m}{n}
\DeclareFontEncoding{LS2}{}{}
\DeclareFontSubstitution{LS2}{stix}{m}{n}
\DeclareMathAlphabet\stixcal{LS2}{stixcal}{m} {n}

\newcommand{\card}{\mbox{\rm{card\,}}}

\newcommand{\add}{\mbox{\rm{add\,}}}

\newcommand{\Def}{\mbox{\rm{Def\,}}}

\newcommand{\Filt}{\mbox{\rm{Filt\,}}}

\newcommand{\End}{\mbox{\rm{End\,}}}

\newcommand{\Hom}[3]{\operatorname{Hom}_{#1}(#2,#3)}

\newcommand{\rmod}[1]{\mbox{\rm{Mod}--}{#1}}

\newcommand{\lmod}[1]{{#1}\mbox{--\rm{Mod}}}
\newcommand{\ModR}{\text{Mod-}R}

\begin{document}
	
	\title{Flat relative Mittag-Leffler modules and approximations}
	
	\author{Asmae Ben Yassine and Jan Trlifaj}
	
	\begin{abstract} The classes $\mathcal D _{\mathcal Q}$ of flat relative Mittag-Leffler modules are sandwiched between the class $\mathcal F \mathcal M$ of all flat (absolute) Mittag-Leffler modules, and the class $\mathcal F$ of all flat modules. Building on the works of Angeleri H\" ugel, Herbera, and \v Saroch, we give a characterization of flat relative Mittag-Leffler modules in terms of their local structure, and show that Enochs' Conjecture holds for all the classes $\mathcal D _{\mathcal Q}$. In the final section, we apply these results to the particular setting of f-projective modules.
	\end{abstract}
	
	\date{\today}
	
  \thanks{Research supported by GA\v CR 20-13778S and SVV-2020-260589}
	
\subjclass[2020]{Primary: 16D40. Secondary: 18G25, 16D90.}
\keywords{flat module, Mittag-Leffler module, approximations of modules, Enochs Conjecture.}	
			
\maketitle

\section*{Introduction}
	
For a ring $R$, denote by $\mathcal P$, $\mathcal F$, and $\mathcal F \mathcal M$ the classes of all projective, flat, and flat Mittag-Leffler  (right $R$-) modules, respectively. We always have the inclusions $\mathcal P \subseteq \mathcal F \mathcal M \subseteq \mathcal F$. The equality $\mathcal P = \mathcal F \mathcal M = \mathcal F$ holds, if and only if $R$ is a right perfect ring. By a classic result of Bass, in this case $\mathcal P$ is a covering class consisting of modules isomorphic to direct sums of (indecomposable projective) modules generated by primitive idempotents of the ring $R$.

If  $R$ is not right perfect, then $\mathcal P \subsetneq \mathcal F \mathcal M \subsetneq \mathcal F$. In fact, though the classes $\mathcal P$ and $\mathcal F \mathcal M$ contain the same countably generated modules, there always exist $\aleph_1$-generated modules in $\mathcal F \mathcal M$ that are not projective, cf.\ \cite[\S VII.1]{EM}. Moreover, there exist countably presented modules $N \in \mathcal F \setminus \mathcal F \mathcal M$. Each such module $N$ is called a Bass module \cite{ST}.

By a classic theorem of Kaplansky, each projective module is a direct sum of countably generated projective modules, so the class $\mathcal P$ is $\aleph_1$-decomposable. If $\kappa = \card R + \aleph_0$, then each flat module is known to be a transfinite extension of $\leq \kappa$-presented flat modules, so the class $\mathcal F$ is $\kappa^+$-deconstructible. The class $\mathcal P$ is easily seen to be precovering, while $\mathcal F$ is a covering class by \cite{BEE} (see Section \ref{prelim} for unexplained terminology).   

The intermediate class $\mathcal F \mathcal M$ can be described as the class of all {\lq}locally projective{\rq}, or better $\aleph_1$-projective modules \cite{HT}. Its global structure over non-right perfect rings is known to be quite complex: there is no cardinal $\lambda$ such that $\mathcal F \mathcal M$ is $\lambda$-deconstructible \cite{HT}; moreover, the class $\mathcal F \mathcal M$ is not precovering \cite{S}.

\medskip
In this note, we will deal with classes of flat relative Mittag-Leffler modules, or more precisely, flat $\mathcal Q$-Mittag-Leffler modules for a class of left $R$-modules $\mathcal Q$. 

The notion of an (absolute) Mittag-Leffler module was introduced already in the seminal paper by Raynaud and Gruson \cite{RG}, and studied in a number of sequel works revealing its many facets. Relative Mittag-Leffler modules appeared much later, in the Habilitationsschrift of Rothmaler \cite{R1}. Rothmaler has further pursued the model theoretic point of view in \cite{R2}, where he proved that if $\mathcal Q$ is a definable class of left $R$-modules and $D(\mathcal Q)$ is its dual definable class of (right $R$-) modules, then $\mathcal Q$-Mittag-Leffler modules are exactly the $D(\mathcal Q)$-atomic modules, \cite[Theorem 3.1]{R2}. For a very recent application of relative atomic modules to a description of Ziegler spectra of tubular algebras, we refer to \cite{P}. 

A detailed algebraic study of relative Mittag-Leffler modules was performed in \cite{AH} (see also \cite{H} and \cite{HT}); notably, it discovered their role in (infinite dimensional) tilting theory.  

Following \cite{HT}, we denote the class of all flat $\mathcal Q$-Mittag-Leffler modules by $\mathcal D _{\mathcal Q}$. Thus $\mathcal F \mathcal M \subseteq \mathcal D _{\mathcal Q} \subseteq \mathcal F$. Notice that $\mathcal D _{\lmod R} = \mathcal F \mathcal M$ and $\mathcal D _{\{ 0 \}} = \mathcal F$, while $\mathcal D _{\{ R \}}$ is the class of all f-projective modules studied by Goodearl et al. in \cite{F}, \cite{G}, etc.

Our goal here is to investigate the structure and approximation properties of the class $\mathcal D _{\mathcal Q}$ in dependence on $\mathcal Q$. In Theorem \ref{test}, we prove that the classes $\mathcal D _{\mathcal Q}$ are determined by their countably presented modules, while Theorem \ref{two} shows that approximation properties of $\mathcal D _{\mathcal Q}$ depend completely on whether there exists a Bass module $N \notin \mathcal D _{\mathcal Q}$. In the final part, we apply these results to the particular setting of $\mathcal Q = \{ R \}$, i.e., to the f-projective modules.

\section{Preliminaries}\label{prelim}

For a ring $R$, we denote by $\rmod R$ the class of all (right $R$-) modules, and by $\lmod R$ the class of all left $R$-modules.
   \subsection{Filtrations and deconstructible classes}
   Let $ R $ be a ring, $ M $ a module, and $ \mathscr{C} $ a class of modules. A family of submodules, $\mathcal{M}=(M_{\alpha} \mid \alpha \leq \sigma)$, of $ M $ is called a \textit{continuous chain} in $M$, provided that $M_{0}=0$, $M_{\alpha} \subseteq M_{\alpha+1}$ for each $\alpha < \sigma $, and $M_\alpha = \bigcup_{\beta < \alpha} M_{\beta}$ for each limit ordinal $\alpha \leq \sigma$.\\
   A continuous chain $\mathcal{M} $ in $M$ is a $ \mathscr{C} $-\textit{filtration} of $ M $, provided that $M=M_{\sigma}$, and each of the modules $M_{\alpha+1}/M_{\alpha}\;(\alpha < \sigma)$ is isomorphic to an element of $ \mathscr{C} $.\\
   $M$ is called $ \mathscr{C} $-\textit{filtered}, provided that $M$ possesses at least one $ \mathscr{C} $-filtration. We will use the notation $ \Filt(\mathscr{C}) $ for the class of all $ \mathscr{C} $-filtered modules. The modules $M \in \Filt(\mathscr{C})$ are also called \textit{transfinite extensions} of the modules in $ \mathscr{C} $. A class $\mathscr{A}$ is said to be \textit{closed under transfinite extensions} provided that $\mathscr{A} = \Filt(\mathscr{A})$. Clearly, this implies that $ \mathscr{A} $ is closed under extensions and arbitrary direct sums. \\
    Given a class $\mathscr{C}$ and a cardinal $  \kappa $, we use $\mathscr{C}^{\leq \kappa}$ and $\mathscr{C}^{< \kappa}$ to denote the subclass of $\mathscr{C}$ consisting of all $\leq \kappa$-presented and $< \kappa$-presented modules, respectively.\\
    Let $ \kappa $ be an infinite cardinal. A class of modules $ \mathscr{C} $ is $ \kappa $-\textit{deconstructible}
    provided that $\mathscr{C}  \subseteq \Filt(\mathscr{C}^{< \kappa})$. If moreover each module $M \in \mathscr{C}$ is a direct sum of modules from $\mathscr{C}^{< \kappa}$, then $ \mathscr{C} $ is called $ \kappa $-\textit{decomposable}. 
		For example, the class $ \mathscr{P} $ of all projective modules is $ \aleph_{1} $-deconstructible by a classic theorem of Kaplansky. A class $ \mathscr{C} $ is \textit{deconstructible} in case it is $ \kappa $-deconstructible for some infinite cardinal $ \kappa $.
    \subsection{Approximations}
    	A map $ f \in \Hom R {C} {M} $ with $ C \in \mathscr{C}  $ is a \emph{$ \mathscr{C} $-precover} of $ M $, if the abelian group homomorphism $ \Hom R {C'} {f}:\Hom R {C'}{C} \rightarrow \Hom R {C'}{M}$ is surjective for each $ C' \in \mathscr{C} $.\\
    	A $ \mathscr{C} $-precover $ f \in \Hom R {C} {M }$ of $ M $ is called a \emph{$ \mathscr{C} $-cover} of $ M $, provided that $ f $ is right minimal, that is, provided $fg=f$ implies that $ g $ is an automorphism for each $ g \in \End_{R}(\mathscr{C}) $.\\
    	$ \mathscr{C} \subseteq \ModR $ is a \emph{precovering class} (\emph{covering class}) provided that each module has a $ \mathscr{C}$-precover ($\mathscr{C} $-cover).
    \subsection{(Relative) Mittag-Leffler modules}
    Let $ R $ be a ring. A module $ M $ is \textit{Mittag-Leffler} provided that the canonical group homomorphism
    $$ \varphi: M \otimes_R \prod_{i \in I} N_{i}\rightarrow \prod_{i \in I} M \otimes_R N_{i}$$
    defined by
    $$ \varphi(m \otimes_R (n_{i})_{i \in I})= (m \otimes_R n_{i})_{i \in I}$$
    is monic for each family $ (N_{i}\;|\; i \in I) $ of left $ R $-modules. \\
    Let $M \in \rmod R$ and $\mathcal Q \subseteq \lmod R$. Then $M$ is \emph{$\mathcal Q$-Mittag-Leffler}, provided that the canonical morphism $M \otimes {\prod_{i \in I} Q_i} \to \prod_{i \in I} M \otimes_R Q_i$ is injective for any family $( Q_i \mid i \in I )$ consisting of elements of $\mathcal Q$.
    \subsection{Direct limits and add(M)}
    Let $ \mathscr{C} $ be any class of modules and $ \mathscr{D}=(C_{i}, f_{ji} \mid i \leq j \in I) $ a direct system of modules in $ \mathscr{C} $. Viewing $ \mathscr{D} $ as a diagram in the category $\rmod R$, we can form its colimit, $ (M, f_{i} \mid i \in I) $. In particular, $ M $ is a module, and $ f_{i}\in \Hom R{M_{i}} {M} $ satisfy $ f_{i} = f_{ij}f_{j} $ for all $ i\leq j \in I $.\\
    This colimit (or sometimes just the module $ M $ itself) is called the \textit{direct limit} of the direct system $ \mathscr{D} $. It is denoted by $ \varinjlim_{i \in I} M_{i} $ (or just $ \varinjlim \mathscr{D} $).\\ 
    Let $\mathcal Q \subseteq \lmod R$. We denote by $ \varinjlim \mathcal Q $ the class of all modules $ N $ such that $N=\varinjlim Q_{i}  $ where $ (Q_{i}, f_{ji} \mid i \leq j \in I) $ is a direct system of modules from $ \mathcal Q $.\\
    Let $ R $ be a ring, $ M $ be a module. We define $ \add(M) $ to be the class of all modules isomorphic to direct summands of finite direct sums of copies of $M$. 
    \subsection{Bass modules}
    Given a class $ \mathscr{C} $ of finitely generated free modules, we call a module $ M $ a \textit{Bass module} over $\mathscr{C}$, provided that $ M $ is the direct limit of a direct system $$ C_{0} \xrightarrow{f_{0}} C_{1} \rightarrow \cdots \xrightarrow{f_{n-1}} C_{n}\xrightarrow{f_{n}} C_{n+1} \xrightarrow{f_{n+1}} \cdots$$ where $ C_{n} \in \mathscr{C}  $ for each $n < \omega$. \\ 
		If $ \mathscr{C} $ is the class of all finitely generated free modules, than the Bass modules over $ \mathscr{C} $ are just called the (unadorned) \emph{Bass modules}; they are exactly the countably presented flat modules.\\
    \\
    For basic properties of the notions defined above, we refer the reader to \cite{GT}.

\section{Flat relative Mittag-Leffler modules}
We record the following well-known properties of the class $\mathcal D _{\mathcal Q}$ of all flat $\mathcal Q$-Mittag-Leffler modules (cf.\ \cite[\SS 1 and 5]{AH}, \cite[\S 4]{HT} or \cite[3.20(a)]{GT}):

\begin{lemma}\label{prop-ML} Let $\mathcal Q \subseteq \lmod R$.
\begin{enumerate} 
\item The class $\mathcal D _{\mathcal Q}$ is closed under pure submodules, extensions, and unions of pure chains. Hence $\mathcal D _{\mathcal Q}$ is closed under transfinite extensions. 
\item $\mathcal D _{\mathcal Q}$ is a resolving subcategory of $\rmod R$ (i.e., $\mathcal D _{\mathcal Q}$ contains all projective modules, and it is closed under extensions and kernels of epimorphisms).
\end{enumerate}
\end{lemma}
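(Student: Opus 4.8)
The plan is to separate the two defining conditions of membership in $\mathcal{D}_\mathcal{Q}$ --- flatness and the $\mathcal{Q}$-Mittag-Leffler property --- and to deal with them independently. For flatness, all the required closure properties (under pure submodules, extensions, direct limits, transfinite extensions, kernels of epimorphisms, and containment of the projectives) are classical, and I will invoke the standard $\operatorname{Tor}$-arguments for them. The real content lies in the $\mathcal{Q}$-Mittag-Leffler condition, which I will analyse through the canonical map itself. Fix a family $(Q_i \mid i \in I)$ in $\mathcal{Q}$, write $P = \prod_{i \in I} Q_i$, and for a module $X$ let $\varphi_X \colon X \otimes_R P \to \prod_{i \in I}(X \otimes_R Q_i)$ be the canonical map, so that $X$ is $\mathcal{Q}$-Mittag-Leffler exactly when $\varphi_X$ is monic for every such family. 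Throughout I exploit that every module in $\mathcal{D}_\mathcal{Q}$ is flat, which makes the short exact sequences obtained by applying $- \otimes_R P$ and $- \otimes_R Q_i$ well behaved, and that a product of short exact sequences of abelian groups is again exact.

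For (i), I treat the three closure properties by naturality squares and diagram chases around $\varphi$. If $\iota \colon N \hookrightarrow M$ is a pure embedding with $M \in \mathcal{D}_\mathcal{Q}$, then purity makes $\iota \otimes_R P$ and each $\iota \otimes_R Q_i$ monic, so in the naturality square the composite $\prod_i(\iota \otimes_R Q_i) \circ \varphi_N = \varphi_M \circ (\iota \otimes_R P)$ is monic, forcing $\varphi_N$ monic; with the classical fact that a pure submodule of a flat module is flat, this gives closure under pure submodules. For an extension $0 \to A \to B \to C \to 0$ with $A, C \in \mathcal{D}_\mathcal{Q}$, I build the ladder relating the right-exact sequence obtained from $- \otimes_R P$ to the product of the sequences obtained from $- \otimes_R Q_i$; flatness of $C$ makes the latter left exact, and a short chase deduces $\varphi_B$ monic from $\varphi_A$ and $\varphi_C$ being monic. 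For a continuous chain $(M_\alpha)$ of members of $\mathcal{D}_\mathcal{Q}$ with each $M_\alpha$ pure in $M = \bigcup_\alpha M_\alpha$, I use that $M \otimes_R P = \varinjlim (M_\alpha \otimes_R P)$ has the $M_\alpha \otimes_R P$ as monic images of subgroups and that $\prod_i(M_\alpha \otimes_R Q_i) \to \prod_i(M \otimes_R Q_i)$ is monic: any $x \in \ker \varphi_M$ already lies in some $M_\alpha \otimes_R P$ and there lies in $\ker \varphi_{M_\alpha} = 0$. Closure under transfinite extensions then follows by transfinite induction along a $\mathcal{D}_\mathcal{Q}$-filtration, the successor step using closure under extensions and the limit step using closure under unions of pure chains; the one point to verify is that such a filtration is a \emph{pure} chain, which holds because each quotient $M/M_\alpha$ is itself flat (being filtered by flat modules), so $M_\alpha$ is pure in $M$.

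For (ii), containment of the projectives comes from the fact that a free module $R^{(J)}$ is (absolutely) Mittag-Leffler --- here $\varphi$ is the evidently monic map $(\prod_i Q_i)^{(J)} \to \prod_i(Q_i^{(J)})$ --- so that a projective module, being a direct summand and hence pure submodule of a free one, is flat and $\mathcal{Q}$-Mittag-Leffler; closure under extensions is already part of (i). Closure under kernels of epimorphisms reduces cleanly to closure under pure submodules: given $0 \to K \to B \to C \to 0$ with $B, C \in \mathcal{D}_\mathcal{Q}$, the module $K$ is flat because $\Tor{1}{R}{K}{-} \cong \Tor{2}{R}{C}{-} = 0$, and since $C$ is flat the sequence is pure exact, so $K$ is a pure submodule of $B \in \mathcal{D}_\mathcal{Q}$ and hence lies in $\mathcal{D}_\mathcal{Q}$ by (i).

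I expect the main obstacle to be bookkeeping rather than conceptual, concentrated in the extension step: one must keep careful track of which portions of the two tensored sequences remain exact, and it is precisely flatness of the quotient $C$ (together with exactness of products) that keeps the lower sequence left exact and lets the chase producing a monic $\varphi_B$ go through. Everything else is a formal consequence of the interplay between purity and the canonical map $\varphi$, combined with the classical closure properties of the class of flat modules.
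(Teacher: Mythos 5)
Your proof is correct. The paper itself offers no argument for this lemma --- it is recorded as well known, with citations to \cite{AH}, \cite{HT} and \cite[3.20(a)]{GT} --- and your treatment via the canonical map $\varphi$ (naturality plus purity for pure submodules, the four-lemma chase for extensions using flatness of the quotient to keep the product sequence left exact, the direct-limit argument for unions of pure chains, the observation that a $\mathcal D_{\mathcal Q}$-filtration is automatically a pure chain because the quotients $M/M_\alpha$ are flat, and the reduction of kernels of epimorphisms to pure submodules via flatness of $C$) is essentially the standard proof found in those sources.
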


\begin{remark} Clearly, $\mathcal D _{\mathcal Q}$ is closed under direct limits, iff $\mathcal D _{\mathcal Q} = \mathcal F$. This case will be examined in more detail in Theorems \ref{test}(ii) and \ref{two} below.  The closure of the class $\mathcal D _{\mathcal Q}$ under products was studied in \cite[\S4]{HT}: if $\mathcal Q$ is the limit closure of a class of finitely presented left $R$-modules, then $\mathcal D _{\mathcal Q}$ is closed under products, iff $R^R \in \mathcal D _{\mathcal Q}$ (see \cite[Theorem 4.6]{HT}). 
\end{remark}

Another basic property of the classes $\mathcal D _{\mathcal Q}$ is that in their study, one can restrict to definable classes of left $R$-modules. Recall that a class of modules is \emph{definable} provided that it is closed under direct limits, direct products and pure submodules. For each class of left $R$-modules $\mathcal Q$ there is a least definable class $\Def (\mathcal Q)$ in $\lmod R$ containing $\mathcal Q$; it is obtained by closing $\mathcal Q$ first by direct products, then direct limits, and finally by pure submodules, cf.\ \cite[Lemma 2.9 and Corollary 2.10]{H}.

\begin{lemma}\label{lim-ML} Let $\mathcal Q \subseteq \lmod R$. Then $\mathcal D _{\mathcal Q} = \mathcal D _{\Def (\mathcal Q)}$. 
\end{lemma}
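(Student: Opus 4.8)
The inclusion $\mathcal D_{\Def(\mathcal Q)}\subseteq\mathcal D_{\mathcal Q}$ is immediate from monotonicity of $\mathcal D_{(-)}$: since $\mathcal Q\subseteq\Def(\mathcal Q)$, being $\Def(\mathcal Q)$-Mittag-Leffler is a formally stronger condition than being $\mathcal Q$-Mittag-Leffler. All the content therefore lies in the reverse inclusion, that is, in showing that a flat $\mathcal Q$-Mittag-Leffler module $M$ is automatically $\Def(\mathcal Q)$-Mittag-Leffler (flatness plays no role below, only the Mittag-Leffler property). Recall from the passage above that $\Def(\mathcal Q)$ is obtained by closing $\mathcal Q$ first under direct products, then under direct limits, and finally under pure submodules, so it suffices to check that the Mittag-Leffler property of $M$ is preserved under each of these three closures of the test class.

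Closure under products is elementary and needs no extra tools. Given a family $(P_k\mid k\in K)$ with each $P_k=\prod_{i\in I_k}Q_{ki}$ and $Q_{ki}\in\mathcal Q$, the canonical map for this family factors as $M\otimes_R\prod_k P_k=M\otimes_R\prod_{(k,i)}Q_{ki}\to\prod_{(k,i)}(M\otimes_R Q_{ki})$, the latter being the canonical map for the single family $(Q_{ki}\mid (k,i))$ drawn from $\mathcal Q$, hence injective; since this composite factors through $\prod_k(M\otimes_R P_k)$, the canonical map $M\otimes_R\prod_k P_k\to\prod_k(M\otimes_R P_k)$ is injective as well. Thus $M$ is $\Prod(\mathcal Q)$-Mittag-Leffler, and we may assume from now on that $\mathcal Q$ is closed under products, so that $\Def(\mathcal Q)$ is the closure of $\mathcal Q$ under direct limits followed by pure submodules.

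For the remaining two closures the family-based definition is awkward, since infinite products do not commute with direct limits; the plan is to switch to the finitary characterization of relative Mittag-Leffler modules from \cite{AH}. In the form I will use it, $M$ is $\mathcal S$-Mittag-Leffler (for $\mathcal S\subseteq\lmod R$) if and only if every homomorphism $f\in\Hom R{F_0}M$ from a finitely presented module $F_0$ factors as $f=hg$ through a finitely presented module $F$, with $g\in\Hom R{F_0}F$ and $h\in\Hom R FM$, in such a way that $\Ker{f\otimes_R Q}=\Ker{g\otimes_R Q}$ for every $Q\in\mathcal S$; note that the inclusion $\Ker{g\otimes_R Q}\subseteq\Ker{f\otimes_R Q}$ always holds because $f\otimes_R Q=(h\otimes_R Q)(g\otimes_R Q)$, so only the reverse inclusion is a genuine requirement. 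Fixing $M\in\mathcal D_{\mathcal Q}$ and such an $f$, the hypothesis supplies a factorization $f=hg$ valid for all $Q\in\mathcal Q$; the key point is that the class $\mathcal G=\{\,Q\in\lmod R\mid\Ker{f\otimes_R Q}=\Ker{g\otimes_R Q}\,\}$, which contains $\mathcal Q$, is closed under direct limits and pure submodules for this one fixed factorization.

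The verification of these two closures is where the finite presentation of $F_0$ and $F$ pays off, and I expect it to be the main (though not deep) obstacle. Closure under direct limits is in fact automatic: the functors $F_0\otimes_R-$ and $F\otimes_R-$ commute with direct limits and filtered colimits of abelian groups are exact, so $\Ker{f\otimes_R\varinjlim_l Q_l}=\varinjlim_l\Ker{f\otimes_R Q_l}$ and likewise for $g$; hence equalities holding cofinally pass to the colimit. Closure under pure submodules uses that if $Q$ is pure in $Q'\in\mathcal G$ then $X\otimes_R Q\hookrightarrow X\otimes_R Q'$ for every module $X$, so intersecting the kernels computed over $Q'$ with $F_0\otimes_R Q$ yields the required equality over $Q$. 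With $\mathcal G\supseteq\mathcal Q$ closed under both operations, $\mathcal G$ contains the closure of $\mathcal Q$ under direct limits and pure submodules, i.e.\ $\Def(\mathcal Q)\subseteq\mathcal G$; as $f$ was arbitrary, the finitary characterization gives that $M$ is $\Def(\mathcal Q)$-Mittag-Leffler, completing the argument.
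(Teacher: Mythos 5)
Your proof is correct. Note, however, that the paper does not prove this lemma at all: it is stated as a known fact, with the surrounding text pointing to \cite[Lemma 2.9 and Corollary 2.10]{H} for the three-step description of $\Def(\mathcal Q)$, the lemma itself being standard in the work of Herbera and Angeleri H\"ugel--Herbera (see also \cite[Theorem 2.2]{R2}). So your argument is a self-contained reconstruction rather than a variant of an argument in the paper, and it follows the route those references take: the product step is handled directly from the definition by regrouping a doubly indexed family, and the direct-limit and pure-submodule steps are reduced to the finitary characterization of relative Mittag-Leffler modules via factorizations $f=hg$ through finitely presented modules with $\Ker{f\otimes_R Q}=\Ker{g\otimes_R Q}$ (cf.\ \cite[\S 1]{AH}, \cite[Theorem 3.16]{GT}). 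The one point worth flagging explicitly is that your argument leans on the \emph{uniform} form of that characterization --- a single factorization $f=hg$ witnessing the kernel equality simultaneously for all $Q\in\mathcal Q$ --- since your class $\mathcal G$ is defined relative to one fixed factorization; this uniformity is indeed part of the cited equivalence (and is exactly what makes the definition, which tests products of arbitrary families from $\mathcal Q$, work), but without it the closure argument for $\mathcal G$ would not go through. With that understood, both closure verifications (exactness of filtered colimits against $F_0\otimes_R-$ and $F\otimes_R-$ for finitely presented $F_0,F$; intersecting kernels along the monomorphisms induced by a pure embedding) are sound.
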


Definable classes are parametrized by the subset of the set of all indecomposable pure-injective modules which they contain. So though $\lmod R$ is a proper class, there is only a set of classes of relative Mittag-Leffler modules. Note however, that it may still happen that $\mathcal D _{\Def (\mathcal Q)} = \mathcal D _{\Def (\mathcal Q ^\prime)}$ even if $\Def (\mathcal Q) \neq \Def (\mathcal Q ^\prime)$: just take a right noetherian ring $R$ which is not completely reducible, and consider the following two definable classes of left $R$-modules: $\mathcal Q = \{ 0 \}$ and $\mathcal Q ^\prime = \mathcal F ^\prime$ (the class of all flat left $R$-modules). Then $\mathcal D _{\mathcal Q} = \mathcal D _{\mathcal Q ^\prime} = \mathcal F$ by Proposition \ref{observe}(i) below. In Theorem \ref{test}, we will give a different parametrization of the classes $\mathcal D _{\mathcal Q}$, by their countably presented modules.   

Our next prerequisite was proved in \cite[2.2]{HT} (see also \cite[3.11]{GT}):

\begin{lemma}\label{qml}
Let $ R $ be a ring, $ M $ be a module, and $\mathcal Q$ be a class of left $R$-modules. Assume that $ M = \varinjlim_{\alpha \in L} M_{\alpha} $ where $ (M_{\alpha}, f_{\beta \alpha}\;|\;\alpha < \beta \in L)$ is a direct system of $\mathcal Q$-Mittag-Leffler modules. Moreover, assume that $ M'=\varinjlim_{n < \omega} M_{\alpha_{n}} $ is $\mathcal Q$-Mittag-Leffler for each countable chain $ \alpha_{0} < \cdots < \alpha_{n} < \alpha_{n+1} < \cdots$ in $ L $. 
		
Then $ M $ is $\mathcal Q$-Mittag-Leffler.
\end{lemma}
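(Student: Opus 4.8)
I would argue by contraposition, after first reducing the Mittag-Leffler condition to a \emph{stabilization property of kernels} that can be tested one witness at a time; this is what makes an $\omega$-chain suffice.

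The plan is to use the relative form of the Raynaud--Gruson criterion (as established in \cite{AH}, cf.\ \cite{GT}): $M$ is $\mathcal Q$-Mittag-Leffler if and only if for every finite tuple $\bar a \in M^n$, regarded as a map $a\colon R^n \to M$, there is a factorization $a = h g$ through a finitely presented module $P$, with $g\colon R^n \to P$ and $h\colon P \to M$, such that $\ker(g \otimes_R Q) = \ker(a \otimes_R Q)$ as subgroups of $Q^n$ for every $Q \in \mathcal Q$. Fixing $\bar a$, I lift it to $\bar a_{\alpha_0} \in M_{\alpha_0}^n$, put $a_\gamma = f_{\gamma \alpha_0} a_{\alpha_0}\colon R^n \to M_\gamma$ for $\gamma \geq \alpha_0$, and set $K_\gamma(Q) = \ker(a_\gamma \otimes_R Q)$. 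Since $a_\delta \otimes Q$ factors through $a_\gamma \otimes Q$ for $\gamma \leq \delta$, these form an \emph{increasing} chain of subgroups of $Q^n$, and because $M \otimes Q = \varinjlim_\gamma (M_\gamma \otimes Q)$ their union is $K_M(Q) = \ker(a \otimes_R Q)$.

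The key reduction is the following claim: $M$ is $\mathcal Q$-Mittag-Leffler at $\bar a$ if and only if the increasing union stabilizes \emph{uniformly in} $Q$, i.e.\ there is $\gamma^\ast \geq \alpha_0$ with $K_{\gamma^\ast}(Q) = K_M(Q)$ for all $Q \in \mathcal Q$. Here the hypothesis that each $M_\gamma$ is $\mathcal Q$-Mittag-Leffler enters in the nontrivial implication: applying the criterion to $M_{\gamma^\ast}$ gives a factorization $a_{\gamma^\ast} = h_{\gamma^\ast} g_{\gamma^\ast}$ through a finitely presented $P_{\gamma^\ast}$ with $\ker(g_{\gamma^\ast} \otimes Q) = K_{\gamma^\ast}(Q) = K_M(Q)$, and composing $h_{\gamma^\ast}$ with the canonical map $M_{\gamma^\ast} \to M$ yields the required factorization of $a$ itself. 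For the converse one takes a factorization $a = h g$ through a finitely presented $P$ with $\ker(g \otimes Q) = K_M(Q)$; since $P$ is finitely presented, $h$ lifts through some $M_{\gamma^\ast}$, and after enlarging $\gamma^\ast$ so the two resulting liftings of $a$ coincide, $a_{\gamma^\ast}$ factors through $g$, which forces $K_{\gamma^\ast}(Q) = K_M(Q)$ for all $Q$. Note this converse does \emph{not} use that the $M_\gamma$ are $\mathcal Q$-Mittag-Leffler, so it applies verbatim to the limit $M'$ of a countable chain.

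Now assume $M$ is not $\mathcal Q$-Mittag-Leffler and fix a witnessing tuple $\bar a$. By the claim the union fails to stabilize uniformly, so for every $\gamma \geq \alpha_0$ there exist $\delta \geq \gamma$, some $Q \in \mathcal Q$, and an element $x \in K_\delta(Q) \setminus K_\gamma(Q)$. I then build the chain $\alpha_0 < \alpha_1 < \cdots$ recursively, choosing at step $n$ a \emph{single} such witness: $Q_n \in \mathcal Q$ and $x_n \in K_{\alpha_{n+1}}(Q_n) \setminus K_{\alpha_n}(Q_n)$, with $\alpha_{n+1} > \alpha_n$. For $M' = \varinjlim_{n<\omega} M_{\alpha_n}$ one has $K_{M'}(Q) = \bigcup_n K_{\alpha_n}(Q)$, and $x_n$ witnesses $K_{\alpha_n}(Q_n) \subsetneq K_{M'}(Q_n)$ for every $n$; hence no single $\alpha_{n^\ast}$ stabilizes the union uniformly, so by the (hypothesis-free direction of the) claim $M'$ is not $\mathcal Q$-Mittag-Leffler. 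This contradicts the assumption on countable chains, and the lemma follows. The main obstacle is precisely setting up the local criterion so that failure of the Mittag-Leffler condition is detectable by \emph{one} witness $(Q_n,x_n)$ per step: this is what lets a single $\omega$-chain reflect the failure and, importantly, sidesteps the fact that there may be a proper class worth of modules $Q$ (equivalently, uncountably many pp-formulas when $R$ is large). The genuine technical content is the bookkeeping in the claim, especially the ``only if'' direction where the $\mathcal Q$-Mittag-Leffler property of the individual $M_\gamma$ is indispensable.
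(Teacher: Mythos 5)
The paper does not prove Lemma~\ref{qml} itself; it quotes it from \cite[2.2]{HT} (see also \cite[3.11]{GT}), and your argument is a correct reconstruction of essentially the standard proof given there: reduce the $\mathcal Q$-Mittag-Leffler property to the relative Raynaud--Gruson factorization criterion for a single tuple, observe that it amounts to uniform stabilization of the increasing chain of kernels $K_\gamma(Q)$, and turn a failure of stabilization into a countable chain whose limit $M'$ inherits the failure. The one direction of your ``claim'' that uses the hypothesis on the $M_\gamma$ and the hypothesis-free converse are both correctly identified and correctly deployed, so I see no gap.
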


For all rings, flat relative Mittag-Leffler modules include the flat (absolute) Mittag-Leffler modules, and for some rings, even all the flat modules (see Section \ref{fproject} below). So the following description of the local structure of flat relative Mittag-Leffler modules extends simultaneously the {\lq}local projectivity{\rq} of flat Mittag-Leffler modules from \cite[Theorem 2.10(i)]{HT} and the deconstructibility, and hence abundance of small pure flat submodules, of flat modules from \cite[Lemma 6.17 and Theorem 7.10]{GT} (cf.\ also  \cite[Theorem 5.1]{AH}, \cite[Theorem 2.6]{HT}, \cite[Corollary 6.5]{R2}, and \cite[Lemma 2.5]{SS}):    

\begin{proposition}\label{one}
	Let $ R $ be a ring, $ M $ be a module, and $\mathcal Q$ be a class of left $R$-modules. Let $ \kappa= \card(R) + \aleph_0 $. Then the following conditions (i)-(iv) are equivalent:
	\begin{enumerate}
		\item $ M $ is a flat $\mathcal Q$-Mittag-Leffler module.
		\item For each subset $ C $ of $ M $ of cardinality $ \leq \kappa $, there exists a pure flat $\mathcal Q$-Mittag-Leffler submodule $N$ of $M$ such that $C \subseteq N$, and $N$ has cardinality $ \leq \kappa $.
		\item There exists a system $ \mathcal{S} $ consisting of pure flat $\mathcal Q$-Mittag-Leffler submodules of $ M $ of cardinality $ \leq \kappa $, such that for each subset $ C $ in $ M $ of cardinality $ \leq \kappa $ there is $N \in \mathcal{S} $ containing $C$, and $ \mathcal{S} $ is closed under unions of well-ordered chains of length $ \leq \kappa $.
		\item $M$ is a directed union of a direct system $ \mathcal{T} $ consisting of flat $\mathcal Q$-Mittag-Leffler submodules of $M$, such that $ \mathcal{T} $ is closed under unions of countable chains. \\
		
Consider also the following condition:	

\item There exists a system $\mathcal{U}$ consisting of countably presented flat $\mathcal Q$-Mittag-Leffler submodules $N$ of $M$ such that the inclusion $N \hookrightarrow M$ remains injective when tensored by any left $R$-module $Q \in \mathcal Q$, and satisfying the following two conditions: (a) for each countable subset $C$ in $M$ there is $N \in \mathcal{U}$ containing $C$, and (b) $\mathcal{U}$ is closed under unions of countable chains.\\

Then (v) implies (i), and if $R \in \mathcal Q$, then (v) is equivalent to (i).	 
	\end{enumerate}
\end{proposition}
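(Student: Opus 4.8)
The plan is to establish the chain of implications $(i)\Rightarrow(ii)\Rightarrow(iii)\Rightarrow(iv)\Rightarrow(i)$ first, since these four conditions are claimed equivalent for an arbitrary class $\mathcal Q$, and then to treat the auxiliary condition $(v)$ separately. For the main cycle, the implication $(iv)\Rightarrow(i)$ is the one where Lemma \ref{qml} does the heavy lifting: if $M$ is the directed union of a direct system $\mathcal T$ of flat $\mathcal Q$-Mittag-Leffler submodules closed under unions of countable chains, then each member of $\mathcal T$ is $\mathcal Q$-Mittag-Leffler, and the countable-chain closure guarantees precisely the hypothesis of Lemma \ref{qml} (the direct limit over any countable ascending chain of indices lands back in $\mathcal T$, hence is $\mathcal Q$-Mittag-Leffler); flatness of $M$ follows since flat modules are closed under direct limits. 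The implications $(ii)\Rightarrow(iii)$ and $(iii)\Rightarrow(iv)$ are essentially bookkeeping: from $(ii)$ one generates a system $\mathcal S$ by closing the family of $\leq\kappa$-sized pure flat $\mathcal Q$-Mittag-Leffler submodules under unions of well-ordered chains of length $\leq\kappa$ (one must check such unions remain pure, flat, $\mathcal Q$-Mittag-Leffler, and of cardinality $\leq\kappa$, using Lemma \ref{prop-ML}(1) for closure under unions of pure chains), and $\mathcal S\subseteq\mathcal T$ witnesses $(iv)$ after noting a countable chain is a special well-ordered chain of length $\leq\kappa$.

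The substantive implication is $(i)\Rightarrow(ii)$, and this is where I expect the main obstacle to lie. The standard mechanism is a Löwenheim–Skolem / closure-under-solutions argument: starting from a subset $C$ of size $\leq\kappa$, one builds an ascending $\omega$-chain $C=C_0\subseteq C_1\subseteq\cdots$ of subsets of size $\leq\kappa$, at each stage adjoining solutions to enough systems of $R$-linear equations so that the union $N=\bigcup_n C_n$ is a pure submodule of the flat module $M$ (this is the deconstructibility / abundance of small pure submodules from \cite[Lemma 6.17 and Theorem 7.10]{GT}). The delicate point is arranging simultaneously that $N$ is not merely pure and flat but actually $\mathcal Q$-Mittag-Leffler. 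Here one invokes the characterization of (relative) Mittag-Leffler modules via the behaviour of finitely generated submodules: $M$ being $\mathcal Q$-Mittag-Leffler means the system of its finite matrix subgroups relative to $\mathcal Q$ satisfies the descending chain / coherence condition, and one must close $C_n$ also under the data needed to reflect this condition down to $N$ (cf.\ \cite[Theorem 5.1]{AH} and \cite[Theorem 2.6]{HT}). Since $|R|+\aleph_0=\kappa$, only $\leq\kappa$-many such closure conditions arise at each step, so the cardinality bound $|N|\leq\kappa$ is preserved.

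For the final assertion, $(v)\Rightarrow(i)$ holds in general: a system $\mathcal U$ as in $(v)$ is in particular a direct system of flat $\mathcal Q$-Mittag-Leffler submodules closed under countable chains whose directed union is $M$ (condition (a) gives that every countable subset, hence every element, lies in some member, so the union is all of $M$ and is directed), which is exactly $(iv)$, and $(iv)\Rightarrow(i)$ is already proved. For the converse under the hypothesis $R\in\mathcal Q$, I would refine the construction of $(i)\Rightarrow(ii)$ to produce countably generated — in fact countably presented — submodules: when $R\in\mathcal Q$, purity of the inclusion $N\hookrightarrow M$ is equivalent to the inclusion remaining injective after tensoring with members of $\mathcal Q$ (since $R\in\mathcal Q$ forces the relevant tensor-injectivity to capture purity), and a countably generated flat $\mathcal Q$-Mittag-Leffler module that is pure in a flat module is automatically countably presented (a countably presented flat module is a Bass module, and pure submodules of flat modules are flat). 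Thus the Löwenheim–Skolem argument run along $\omega$ with countable increments yields the system $\mathcal U$, closed under countable chains by construction, establishing $(i)\Rightarrow(v)$ and completing the equivalence.
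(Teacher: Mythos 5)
Your cycle (i)$\Rightarrow$(ii)$\Rightarrow$(iii)$\Rightarrow$(iv)$\Rightarrow$(i) follows the same route as the paper, but you make (i)$\Rightarrow$(ii) harder than it is. There is no ``delicate point'' in arranging that the small pure submodule $N$ be $\mathcal Q$-Mittag-Leffler: once one has a pure submodule $N$ of $M$ of cardinality $\leq\kappa$ containing $C$ (which exists by the standard L\"owenheim--Skolem argument, \cite[Lemma 2.25(i)]{GT}), $N$ is automatically flat and $\mathcal Q$-Mittag-Leffler because $\mathcal D _{\mathcal Q}$ is closed under pure submodules (Lemma \ref{prop-ML}(i)) --- the very closure property you yourself invoke for (ii)$\Rightarrow$(iii). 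Your plan of ``closing $C_n$ under the data needed to reflect the Mittag-Leffler condition down to $N$'' is both unnecessary and, as stated, too vague to count as a proof of this step. The rest of the cycle, and your reduction of (v)$\Rightarrow$(i) to (iv), are fine and match the paper.

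The genuine gap is in (i)$\Rightarrow$(v). You claim that when $R\in\mathcal Q$, the inclusion $N\hookrightarrow M$ remaining injective after tensoring with members of $\mathcal Q$ is equivalent to purity, and you then propose to run the L\"owenheim--Skolem construction with countable increments to produce countably generated \emph{pure} submodules. Both steps fail. Tensoring with $R$ preserves every monomorphism, so for $\mathcal Q=\{R\}$ the tensor-injectivity condition in (v) is satisfied by every submodule and is certainly not purity. Worse, the countably generated pure submodules you want need not exist at all: for $R=\mathbb C[x]$, $\mathcal Q=\{R\}$, and $M$ the quotient field of $R$, the module $M$ is flat $\mathcal Q$-Mittag-Leffler yet has no non-zero countably generated pure submodule --- this is exactly part 3 of Remark \ref{nopure}. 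The whole point of condition (v) is that the members of $\mathcal U$ are small but typically \emph{not} pure in $M$. The paper's proof instead takes $\mathcal U$ to be the set of all countably presented flat $\mathcal Q$-Mittag-Leffler submodules $N$ with $N\otimes_R Q\to M\otimes_R Q$ monic for all $Q\in\mathcal Q$, obtains condition (a) from the implication (1)$\Rightarrow$(4) of \cite[Theorem 5.1]{AH} (this is where the hypothesis $R\in\mathcal Q$ enters), and obtains condition (b) from the implication (4)$\Rightarrow$(1) of the same theorem together with the fact that a direct limit of monomorphisms is a monomorphism. Some external input of this kind is unavoidable here; your argument as written does not reach (v).
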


\begin{proof}
	(i) $ \Rightarrow $ (ii). Let $ C $ be a subset of $ M $ with $\card C \leq \kappa $. Then there is a pure submodule $ P \subseteq^{*} M $ such that $ C \subseteq P $ and $\card P \leq \kappa $ (see e.g.\ \cite[2.25(i)]{GT}). By Lemma \ref{prop-ML}, $P$ is flat and $\mathcal Q$-Mittag-Leffler, whence (ii) holds. \\
  (ii) $ \Rightarrow $ (iii). We will prove that the set $\mathcal{S} $ consisting of all pure flat $\mathcal Q$-Mittag-Leffler submodules of $M$ of cardinality $\leq \kappa$ has the required two properties. The first one is just a restatement of (ii). For the second (closure under unions of well-ordered chains of length $\leq \kappa$), let $( N_{\alpha} \mid \alpha < \kappa )$ be a such a chain in $\mathcal{S} $. 
Let $ N= \bigcup_{\alpha < \kappa} N_{\alpha} $. Since $ N_{\alpha} $ is pure in $M$ for each $\alpha < \kappa$, $N$ is pure in $M$, too, by \cite[Lemma 2.25(d)]{GT}. Since $ \card N \leq \kappa $, (ii) yields existence of $N' \in \mathcal{S} $ such that $ N \subseteq N'$. Finally, $ N \subseteq^{*} M $ implies $ N \subseteq^{*} N' $. As $N'$ is flat and $\mathcal Q$-Mittag-Leffler, Lemma \ref{prop-ML}(i) gives $N \in \mathcal S$.\\
	(iii) $ \Rightarrow $ (iv) This is clear - just take $\mathcal{T}=\mathcal{S} $.\\
	(iv) $ \Rightarrow $ (i) First, $M$, being a directed union of flat modules, is flat. In view of (iv), we can apply Lemma \ref{qml} to the presentation of $M$ as the directed union of the elements of $ \mathcal{T} $; thus, $M$ is $\mathcal Q$-Mittag-Leffler.
	
	Assume (v). Then $M$ is a directed union of the modules in $\mathcal U$, and Lemma \ref{qml} applies, showing that $M$ is a flat $\mathcal Q$-Mittag-Leffler module.  
	
Finally, let $R \in \mathcal Q$. Assume $M$ is a flat $\mathcal Q$-Mittag-Leffler module, and let $\mathcal U$ be the set consisting of all countably presented flat $\mathcal Q$-Mittag-Leffler submodules $N$ of $M$ such that the inclusion $N \hookrightarrow M$ remains injective when tensored by any left $R$-module $Q \in \mathcal Q$. Since $R \in \mathcal Q$, the implication (1) $\Rightarrow$ (4) of 
\cite[Theorem 5.1]{AH} (for $\mathcal S$ = the class of all finitely generated free modules) yields condition (a). Let $N^\prime$ be the union of a countable chain $( N_i \mid i < \omega )$ of modules from $\mathcal U$. Then $N^\prime$ is flat, and each finite subset of $N^\prime$ is contained in some term of the chain, so by the implication (4) $\Rightarrow$ (1) of \cite[Theorem 5.1]{AH}, $N^\prime$ is a $\mathcal Q$-Mittag-Leffler module. Since the inclusion $\nu : N^\prime \hookrightarrow M$ is a direct limit of the inclusions $\nu_i : N_i \hookrightarrow M$ ($i < \omega$), $\nu \otimes_R Q$ is the direct limit of the monomorphisms $\nu_i \otimes_R Q: N_i \otimes _R Q \hookrightarrow M \otimes _R Q$ ($i < \omega$), hence $\nu \otimes_R Q$ is injective, for each $Q \in \mathcal Q$. Thus $N^\prime \in \mathcal U$, and condition (b) holds, too. 
\end{proof} 

\begin{remark}\label{nopure} 1. If $ R \notin \mathcal Q $, then (i) need not imply (v). For a simple counter-example, consider a von Neumann regular ring $ R $ such that there exists a simple module $M$ which is not countably presented (i.e., $ M = R/I $ where $ I $ is a maximal right ideal of $ R $ which is not countably generated). Examples of such rings $R$ include infinite products of fields, or endomorphism rings of infinite dimensional linear spaces. Let $\mathcal Q = \{ 0 \}$. Then $ M $ is flat (= flat $ \mathcal Q $-Mittag-Leffler), as all modules over von Neumann regular rings are flat, but (v) fails, because the only countably presented submodule of $ M $ is $ 0 $.  

2. If we remove the assumption of flatness from conditions (i)-(v), then the result still holds true, cf.\ \cite[Theorem 2.6]{HT}.

3. The system $\mathcal S$ in (iii) consists of {\lq}big{\rq} (= of cardinality $\leq \kappa$) pure submodules of $M$ and it is closed under unions of {\lq}long{\rq} (= of length $\leq \kappa$) well-ordered chains, while the system $\mathcal U$ in (v) consists of {\lq}small{\rq} (= countably presented), but possibly non-pure, submodules of $M$, and it is closed under unions of {\lq}short{\rq} (= countable) chains. 

It may even happen that no non-zero module in the system $\mathcal U$ is pure in $M$: for an example, consider the polynomial ring $R = \mathbb C [x]$, let $\mathcal Q = \{ R \}$, and let $M$ be the quotient field of $R$ viewed as an (uncountably generated) $R$-module. Then $M \in \mathcal D _{\mathcal Q}$, but $M$ has no non-zero countably generated pure submodules. So in this setting, there are only the trivial choices for a system $\mathcal S$ satisfying condition (iii) (namely, $\mathcal S = \{ M \}$, and $\mathcal S = \{ 0, M \}$), while $\mathcal U$ from (v) must be uncountable - e.g., $\mathcal U$ can be taken as the set of all countably generated submodules of $M$.  
\end{remark}

We arrive at a simple test of coincidence for various classes $\mathcal D _{\mathcal Q}$ -- one only has to check their countably presented modules:

\begin{theorem}\label{test}
Let $R$ be a ring.
\begin{enumerate} 
\item Let $\mathcal Q$ and $\mathcal Q ^\prime$ be classes of left $R$-modules containing $R$. Then $\mathcal D _{\mathcal Q} = \mathcal D _{\mathcal Q ^\prime}$, iff $\mathcal D _{\mathcal Q}$ and $\mathcal D _{\mathcal Q ^\prime}$ contain the same countably presented modules.
\item Let $\mathcal Q$ be an arbitrary class of left $R$-modules. Then $\mathcal D _{\mathcal Q} = \mathcal F$, iff each countably presented flat module is $\mathcal Q$-Mittag-Leffler. 
\item Let $\mathcal Q$ be a class of left $R$-modules containing $R$. Then $\mathcal D _{\mathcal Q} = \mathcal F \mathcal M$, iff each countably presented flat $\mathcal Q$-Mittag-Leffler module is projective. 
\end{enumerate}
\end{theorem}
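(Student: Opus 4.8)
The plan is to reduce all three parts to the local characterization of $\mathcal{D}_{\mathcal{Q}}$ furnished by Proposition \ref{one}, together with the direct-limit criterion of Lemma \ref{qml}. The three \emph{only if} directions are essentially immediate: in (i) it is trivial; in (ii) a countably presented flat module lies in $\mathcal{F} = \mathcal{D}_{\mathcal{Q}}$ and hence is $\mathcal{Q}$-Mittag-Leffler; and in (iii) a countably presented module of $\mathcal{D}_{\mathcal{Q}} = \mathcal{F}\mathcal{M}$ is a countably generated flat Mittag-Leffler module, hence projective by the classical coincidence of $\mathcal{P}$ and $\mathcal{F}\mathcal{M}$ on countably generated modules recalled in the Introduction. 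So the content lies in the three \emph{if} directions.

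For (i), by symmetry it suffices to prove $\mathcal{D}_{\mathcal{Q}} \subseteq \mathcal{D}_{\mathcal{Q}^\prime}$. Fix $M \in \mathcal{D}_{\mathcal{Q}}$. Since $R \in \mathcal{Q}$, Proposition \ref{one} supplies a system $\mathcal{U}$ of countably presented flat $\mathcal{Q}$-Mittag-Leffler submodules of $M$ satisfying conditions (a) and (b) of (v). Using countable generating sets together with (a), one checks that $\mathcal{U}$ is directed and that $M = \bigcup_{N \in \mathcal{U}} N$, so that $M = \varinjlim_{N \in \mathcal{U}} N$. Each $N \in \mathcal{U}$ is a countably presented module of $\mathcal{D}_{\mathcal{Q}}$, hence by hypothesis lies in $\mathcal{D}_{\mathcal{Q}^\prime}$, i.e.\ is $\mathcal{Q}^\prime$-Mittag-Leffler; and the union of any countable chain in $\mathcal{U}$ again lies in $\mathcal{U}$ by (b), so it is countably presented and flat $\mathcal{Q}$-Mittag-Leffler, hence again $\mathcal{Q}^\prime$-Mittag-Leffler by hypothesis. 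Both hypotheses of Lemma \ref{qml} (applied with $\mathcal{Q}^\prime$ in place of $\mathcal{Q}$) are thus met, so $M$ is $\mathcal{Q}^\prime$-Mittag-Leffler; being flat, $M \in \mathcal{D}_{\mathcal{Q}^\prime}$.

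For (ii), no hypothesis $R \in \mathcal{Q}$ is available, so I avoid condition (v) and instead use Lazard's presentation. Let $M$ be flat and write $M = \varinjlim_{i \in I} F_i$ with each $F_i$ finitely generated free. Finitely presented modules are absolutely Mittag-Leffler, hence $\mathcal{Q}$-Mittag-Leffler, so each $F_i$ qualifies. For any countable chain $i_0 < i_1 < \cdots$ in $I$, the colimit $\varinjlim_n F_{i_n}$ is a countable direct limit of finitely generated free modules, i.e.\ a countably presented flat module (a Bass module, as in the preliminaries); by hypothesis it is $\mathcal{Q}$-Mittag-Leffler. Lemma \ref{qml} then yields that $M$ is $\mathcal{Q}$-Mittag-Leffler, whence $\mathcal{F} \subseteq \mathcal{D}_{\mathcal{Q}}$, and equality follows from the standing inclusion $\mathcal{D}_{\mathcal{Q}} \subseteq \mathcal{F}$.

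For (iii), I reduce to (i) by taking $\mathcal{Q}^\prime = \lmod R$, for which $\mathcal{D}_{\mathcal{Q}^\prime} = \mathcal{F}\mathcal{M}$ and $R \in \mathcal{Q}^\prime$. The countably presented members of $\mathcal{F}\mathcal{M}$ are exactly the countably presented projective modules, while under the standing hypothesis the countably presented members of $\mathcal{D}_{\mathcal{Q}}$, namely the countably presented flat $\mathcal{Q}$-Mittag-Leffler modules, are also exactly the countably presented projective modules. Hence $\mathcal{D}_{\mathcal{Q}}$ and $\mathcal{D}_{\lmod R}$ share their countably presented members, and part (i) gives $\mathcal{D}_{\mathcal{Q}} = \mathcal{F}\mathcal{M}$. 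The main obstacle throughout is the \emph{if} direction of (i): one must ensure that Proposition \ref{one}(v) genuinely provides a directed, countable-chain-closed system of countably presented submodules whose members and countable unions are forced into $\mathcal{D}_{\mathcal{Q}^\prime}$ purely from agreement on countably presented modules. This is exactly what renders Lemma \ref{qml} applicable and constitutes the crux of the argument.
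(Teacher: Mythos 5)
Your proposal is correct and follows essentially the same route as the paper: Proposition \ref{one}(v) combined with Lemma \ref{qml} for part (i), Lazard's presentation plus Lemma \ref{qml} for part (ii), and the reduction of (iii) to (i) via $\mathcal Q' = \lmod R$ using the projectivity of countably presented flat Mittag-Leffler modules. The only difference is that you argue the nontrivial directions directly while the paper phrases them contrapositively (extracting a countably presented counterexample from $\mathcal U$, resp.\ a Bass module over the Lazard system), which is the same argument.
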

\begin{proof} (i) Assume there is a module $M \in \mathcal D _{\mathcal Q} \setminus \mathcal D _{\mathcal Q ^\prime}$. Consider the system $\mathcal U \subseteq \mathcal D _{\mathcal Q}$ provided by Proposition \ref{one}(v) for the class $\mathcal Q$. Then $M$ is the directed union of the modules in $\mathcal U$, but $M \notin \mathcal D _{\mathcal Q ^\prime}$. By Lemma \ref{qml}, there is a (countably presented) module $N \in \mathcal U$ such that $N \notin \mathcal D _{\mathcal Q ^\prime}$.      

(ii) Assume there is a module $M \in \mathcal F \setminus \mathcal D _{\mathcal Q}$. Being flat, $M$ is a direct limit of a direct system $\mathfrak D$ of finitely generated free modules. By Lemma \ref{qml}, there exists a Bass module $N$ over $\mathfrak D$ such that $N \notin \mathcal D _{\mathcal Q}$. 

(iii) is a special instance of (i) for $\mathcal Q ^\prime = \lmod R$, since countably presented flat Mittag-Leffler modules are projective.
\end{proof}

Note that in \cite[Theorem 7.1]{R2}, countably generated $\mathcal Q$-Mittag-Leffler modules were characterized using $D(\Def(\mathcal Q))$-pure chains of finitely presented modules. 

\medskip
Our next theorem says that precovers (right approximations) by modules in the class $\mathcal D _{\mathcal Q}$ exist only in the threshold case of $\mathcal D _{\mathcal Q} = \mathcal F$. The theorem also confirms Enochs' Conjecture (that covering classes of modules are closed under direct limits) for all the classes $\mathcal D _{\mathcal Q}$:

\begin{theorem}\label{two}
	Let $ R $ be a ring and $\mathcal Q$ be a class of left $R$-modules. Then the following conditions are equivalent:
	\begin{enumerate}
	  \item Each Bass module (= countably presented flat module) is $\mathcal Q$-Mittag-Leffler. 
		\item $\mathcal D _{\mathcal Q} = \mathcal F$.
		\item $\mathcal D _{\mathcal Q}$ is covering.
		\item $\mathcal D _{\mathcal Q}$ is precovering.
		\item $\mathcal D _{\mathcal Q}$ is deconstructible.
		\item $\mathcal D _{\mathcal Q}$ is closed under direct limits.
	\end{enumerate}	
\end{theorem}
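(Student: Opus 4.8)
The plan is to run the whole equivalence through condition (2), reserving a single genuinely hard step. Conditions (1) and (2) are already equivalent by Theorem \ref{test}(ii), since the Bass modules are exactly the countably presented flat modules and (1) says precisely that all of them are $\mathcal Q$-Mittag-Leffler. The implications leaving (2) are then cheap: the class $\mathcal F$ is covering by \cite{BEE}, is $\kappa^+$-deconstructible for $\kappa = \card(R)+\aleph_0$, and is trivially closed under direct limits, so (2) yields (3), (5), and (6) at once, while (3) $\Rightarrow$ (4) is immediate. The return implication (6) $\Rightarrow$ (2) is equally quick (and was already noted in the Remark): by Lemma \ref{prop-ML}(2) every finitely generated free module lies in $\mathcal D_{\mathcal Q}$, and every flat module is a direct limit of such, so closure under direct limits forces $\mathcal F \subseteq \mathcal D_{\mathcal Q}$, hence $\mathcal D_{\mathcal Q} = \mathcal F$.

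It then remains to feed (4) and (5) back into (1), and I would first absorb (5) into (4). Since $\mathcal D_{\mathcal Q}$ is closed under transfinite extensions by Lemma \ref{prop-ML}(1), deconstructibility gives $\mathcal D_{\mathcal Q} = \Filt(\mathscr S)$ for the set $\mathscr S = \mathcal D_{\mathcal Q}^{<\kappa}$, which contains $R$. By the Eklof--Trlifaj theorem (see \cite{GT}) the cotorsion pair generated by $\mathscr S$ is complete, and its left-hand class is the class of direct summands of $\mathscr S$-filtered modules. Because $\mathcal D_{\mathcal Q}$ is closed under transfinite extensions and under direct summands (direct summands are pure submodules, so this is again Lemma \ref{prop-ML}(1)), this left-hand class is exactly $\mathcal D_{\mathcal Q}$. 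Hence $\mathcal D_{\mathcal Q}$ is special precovering, giving (4). At this point every condition has been linked except the passage back to the start.

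Everything thus reduces to the crucial implication (4) $\Rightarrow$ (1), which I would prove in contrapositive form: if some Bass module $N$ is not $\mathcal Q$-Mittag-Leffler, i.e.\ $N \notin \mathcal D_{\mathcal Q}$, then $\mathcal D_{\mathcal Q}$ is not precovering. Here I would invoke the Bass-module obstruction of Šaroch \cite{S} --- the same machinery that shows $\mathcal F\mathcal M$ is not precovering. Its hypotheses all hold for $\mathcal D_{\mathcal Q}$ by Lemma \ref{prop-ML}: the class is closed under transfinite extensions, closed under direct summands, contains all projectives, and sits inside $\mathcal F$. From the chain $F_0 \to F_1 \to \cdots$ of finitely generated free modules presenting $N$ one builds a suitably large module whose hypothetical $\mathcal D_{\mathcal Q}$-precover would be forced to contain a copy of $N$ as a pure submodule or direct summand, contradicting $N \notin \mathcal D_{\mathcal Q}$ and the closure properties just listed.

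I expect this last step to be the main obstacle. The implications routed through (2) are bookkeeping against known properties of $\mathcal F$ and a single application of Eklof--Trlifaj, whereas (4) $\Rightarrow$ (1) rests on the delicate construction of \cite{S}; the care required is precisely in verifying the closure hypotheses ($\Filt(\mathcal D_{\mathcal Q}) = \mathcal D_{\mathcal Q}$, closure under summands, $\mathcal P \subseteq \mathcal D_{\mathcal Q} \subseteq \mathcal F$) so that the black box applies, after which the contrapositive closes the cycle back to (1).
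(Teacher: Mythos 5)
Your proposal is correct and its overall architecture coincides with the paper's: (1)$\Leftrightarrow$(2) via Theorem \ref{test}(ii), the cheap implications out of (2) using the known properties of $\mathcal F$, and the hard implication (4)$\Rightarrow$(1) delegated to the tree-module machinery of \v Saroch \cite{S}. The one genuine difference is how (5) is fed back into the cycle. The paper closes that loop directly, quoting \cite[Corollary 7.2(ii)]{HT} for (v)$\Rightarrow$(i). You instead route (5) through (4): since $\mathcal D_{\mathcal Q}$ is closed under transfinite extensions and direct summands and contains $R$, deconstructibility gives $\mathcal D_{\mathcal Q}=\Filt(\mathcal D_{\mathcal Q}^{<\kappa})$, and the Eklof--Trlifaj completeness theorem identifies $\mathcal D_{\mathcal Q}$ with the left class of a complete cotorsion pair, hence special precovering. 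This is a valid and essentially self-contained argument (modulo the standard completeness theorem in \cite{GT}), at the price of making (5) depend on the delicate implication (4)$\Rightarrow$(1) rather than on an independent citation.

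One small caveat on the step you rightly flag as the main obstacle: your description of the \v Saroch obstruction is not quite how the paper (or \cite[Lemma 3.2]{S}) runs it. One does not build a large module whose precover is forced to contain $N$; rather, one takes a $\mathcal D_{\mathcal Q}$-precover $f\colon A\to N$ of the Bass module $N$ itself, sets $M=\ker f$, chooses $\kappa$ with $\card M\le 2^\kappa=\kappa^\omega$, and uses the tree module $L$ from \cite[Lemma 5.6]{ST} sitting in $0\to D\to L\to N^{(2^\kappa)}\to 0$ (with $L$ flat Mittag-Leffler, hence in $\mathcal D_{\mathcal Q}$) to force $f$ to split, so that $N$ is a direct summand of $A\in\mathcal D_{\mathcal Q}$ --- contradicting $N\notin\mathcal D_{\mathcal Q}$ by closure under direct summands. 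The closure hypotheses you list are exactly the ones needed, so this is a matter of precision in invoking the black box rather than a gap.
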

\begin{proof}
   (i) $ \Rightarrow $ (ii) by Theorem \ref{test}(ii).
	
	(ii) $ \Rightarrow $ (iii), (v), and (vi): This follows from the fact that for any ring $R$, the class of all flat modules is a deconstructible covering class closed under direct limits, cf.\ \cite{BEE}.
	
	(iii) $ \Rightarrow $ (iv) is trivial.
	
	(iv) $ \Rightarrow $ (i): Assume (i) fails, so there is a Bass module $N \in \mathcal F \setminus \mathcal D _{\mathcal Q}$. Let $f : A \to N$ be a (surjective) $\mathcal D _{\mathcal Q}$-precover of $N$ and $M = \ker f$. Let $\kappa$ be an infinite cardinal such that $\card R \leq \kappa$ and $\card M \leq 2^\kappa = \kappa^\omega$. By \cite[Lemma 5.6]{ST}, there are a {\lq}tree module{\rq} $L$ and an exact sequence $0 \to D \to L \to N^{(2^\kappa)} \to 0$, where $D$ is a direct sum of $\kappa$ finitely generated free modules and $L$ is flat and Mittag-Leffler. Proceeding as in the proof of \cite[Lemma 3.2]{S}, we infer that $f$ splits, whence $N \in\mathcal D _{\mathcal Q}$, a contradiction.  
	
		(v) $ \Rightarrow $ (i): This has been proved in \cite[Corollary 7.2(ii)]{HT}.
		
		(vi) $ \Rightarrow $ (i): This holds since $\mathcal F = \varinjlim \mathcal P$, whence $\varinjlim \mathcal D _{\mathcal Q} = \mathcal F$.	
\end{proof}

\begin{remark}\label{rem} If $\mathcal Q$ is a class of left $R$-modules such that $\mathcal D _{\mathcal Q} = \mathcal F$, then $\mathcal D _{\mathcal Q} = \Filt \mathcal D _{\mathcal Q} ^{\leq \kappa}$ for any infinite cardinal $\kappa \geq \card R$. 

In constrast, if $\mathcal Q$ is a class of left $R$-modules such that $\mathcal D _{\mathcal Q} \neq \mathcal F$, then the classes $\Filt \mathcal D _{\mathcal Q} ^{\leq \kappa}$, where $\kappa$ runs over all infinite cardinals $\geq \card R$, form a strictly increasing {\lq}chain{\rq} -- a proper class of subclasses of $\mathcal D _{\mathcal Q}$ -- consisting of classes closed under transfinite extensions, whose union is $\mathcal D _{\mathcal Q}$.  

Indeed, the existence of a Bass module $N \in \mathcal F \setminus \mathcal D _{\mathcal Q}$ makes it possible to construct for each infinite cardinal $\kappa \geq \card R$ a $\kappa^+$-generated flat Mittag-Leffler module $M_{\kappa^+}$ such that $M_{\kappa^+}$ is not $\mathcal D _{\mathcal Q} ^{\leq \kappa}$-filtered, cf.\ \cite[\S 5]{HT}. Thus $\Filt \mathcal D _{\mathcal Q} ^{\leq \kappa} \subsetneq \Filt \mathcal D _{\mathcal Q} ^{\leq \kappa ^+} \subsetneq \mathcal D _{\mathcal Q}$, and $\mathcal D _{\mathcal Q} = \bigcup_{\kappa \geq \card R} \Filt \mathcal D _{\mathcal Q} ^{\leq \kappa}$. Moreover, though $\mathcal F \mathcal M \subseteq \mathcal D _{\mathcal Q}$, there is no cardinal $\kappa$ such that $\mathcal F \mathcal M \subseteq \Filt \mathcal D _{\mathcal Q} ^{\leq \kappa}$.
\end{remark}

\section{$f$-projective modules}\label{fproject}

In this section, we will consider a particular kind of relative Mittag-Leffler modules, the f-projective ones. Their original definition is as follows:

\begin{definition}\label{f-proj}	
A module $ M $ is said to be \textit{f-projective} if for every finitely generated submodule $ C $ of $ M $, the inclusion map factors through a (finitely generated) free module $ F $.
		\[\begin{tikzcd}		
			C \arrow[hookrightarrow]{r}	\arrow[dr]  &M\\
			& F 	\arrow[u]&		
		\end{tikzcd}\]
\end{definition}
	
So every projective module is f-projective, and every finitely generated f-projective module is projective. Since flat modules are characterized as the direct limits of finitely generated free modules, each f-projective module is flat by the following lemma due to Lenzing, cf.\ \cite[Lemma 2.13]{GT}:

	\begin{lemma} \label{lem4}
		Let $ R $ be a ring and $ \mathcal{C} $ be a class of finitely presented modules closed under finite direct sums. Then the following are equivalent for a module $ M $.
		\begin{enumerate}
			\item Every homomorphism $ \varphi : G \rightarrow M $, where $ G $ is finitely presented, has a factorisation through a module in $ \mathcal{C} $.
			\item $ M \in \varinjlim \mathcal{C} $.
		\end{enumerate}
	\end{lemma}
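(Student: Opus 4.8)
The plan is to establish the two implications separately; the implication (2) $\Rightarrow$ (1) is the soft one. Suppose $M = \varinjlim_{i \in I} C_i$ with all $C_i \in \mathcal{C}$, and let $\varphi : G \to M$ with $G$ finitely presented. Since $G$ is finitely presented, the canonical map $\varinjlim_{i} \Hom R G {C_i} \to \Hom R G M$ is an isomorphism; in particular $\varphi$ lies in the image of some $\Hom R G {C_i}$, so it factors as the structure map $C_i \to M$ composed with a suitable $\psi : G \to C_i$, yielding the required factorization through $C_i \in \mathcal{C}$.

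For the substantial direction (1) $\Rightarrow$ (2), I would realize $M$ as a filtered colimit over the comma-type category $\mathcal{I}$ whose objects are the pairs $(C,g)$ with $C \in \mathcal{C}$ and $g \in \Hom R C M$, a morphism $(C,g) \to (C',g')$ being an $R$-homomorphism $h : C \to C'$ with $g'h = g$. The assignment $(C,g) \mapsto C$ is a functor $\mathcal{I} \to \rmod R$, and the maps $g$ assemble into a cocone on $M$, inducing $\theta : \varinjlim_{\mathcal{I}} C \to M$. The goal is to show that $\mathcal{I}$ is filtered (it is nonempty, containing $(0,0)$, the empty direct sum lying in $\mathcal{C}$) and that $\theta$ is an isomorphism, which places $M$ in $\varinjlim \mathcal{C}$.

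Filteredness splits into two requirements. Common upper bounds for $(C_1,g_1)$ and $(C_2,g_2)$ are furnished by $C_1 \oplus C_2 \in \mathcal{C}$ (this is exactly where closure under finite direct sums enters) together with $(g_1,g_2)$ and the two inclusions. The delicate part, which I expect to be the main obstacle, is coequalizing a parallel pair $h_1,h_2 : (C,g) \to (C',g')$: here $d = h_1 - h_2$ has finitely generated image $D = \im d$, contained in $\Ker{g'}$, so the quotient $C'/D$ remains finitely presented (a finitely presented module modulo a finitely generated submodule is finitely presented), and $g'$ descends to $\bar g' : C'/D \to M$. Applying hypothesis (1) to $\bar g'$ gives a factorization through some $C'' \in \mathcal{C}$, and composing with the projection $C' \to C'/D$ produces a morphism $k : (C',g') \to (C'',g'')$ in $\mathcal{I}$ with $kh_1 = kh_2$, since the projection annihilates $D$. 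Precisely this step forces the joint use of the hypothesis and the finiteness built into $\mathcal{C}$.

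It remains to check that $\theta$ is bijective, which I would do elementwise, exploiting that the colimit is filtered. For surjectivity, given $m \in M$ the homomorphism $R \to M$ with $1 \mapsto m$ emanates from the finitely presented module $R$, so by (1) it factors through some $(C,g) \in \mathcal{I}$, exhibiting $m$ in the image of $g$ and hence of $\theta$. For injectivity, a colimit element annihilated by $\theta$ is represented by some $c \in C$ with $g(c)=0$; passing to $C/cR$ (again finitely presented, as $cR$ is cyclic), factoring $g$ through it, and re-factoring through $\mathcal{C}$ by (1) yields a morphism of $\mathcal{I}$ sending $c$ to $0$, so the class of $c$ vanishes in the colimit. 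Hence $\theta$ is an isomorphism and $M = \varinjlim_{\mathcal{I}} C \in \varinjlim \mathcal{C}$, completing the argument.
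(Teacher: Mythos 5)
Your proof is correct. Note that the paper does not actually prove this lemma: it attributes the result to Lenzing and cites \cite[Lemma 2.13]{GT}, so there is no in-paper argument to compare against; your write-up is essentially the standard proof of Lenzing's lemma. The easy direction (2)\,$\Rightarrow$\,(1) via $\varinjlim\Hom R G {C_i}\cong\Hom R G M$ is exactly right, and in (1)\,$\Rightarrow$\,(2) you correctly identify the two places where the hypotheses are used: closure under finite direct sums gives upper bounds in the comma category, and condition (1) applied to the finitely presented quotients $C'/\im(h_1-h_2)$ and $C/cR$ (finitely presented because a finitely presented module modulo a finitely generated submodule is finitely presented) gives the coequalizing morphisms and the injectivity of $\theta$. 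Two cosmetic points you may wish to smooth over: the paper defines $\varinjlim\mathcal C$ via direct systems indexed by directed posets, so you should invoke the standard fact that a filtered colimit can be re-indexed as a directed one (or note that $\mathcal I$ is equivalent to a small filtered category after choosing a set of representatives of the finitely presented modules in $\mathcal C$); and nonemptiness of $\mathcal I$ need not rely on the convention that the empty direct sum lies in $\mathcal C$ --- it already follows from applying (1) to the zero map $R\to M$.
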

		
The fact that f-projective modules are a particular kind of flat relative Mittag-Leffler modules goes back to Goodearl \cite{G}, see also \cite[Proposition 2.7]{F}:

\begin{proposition}\label{x1} 
		A module $ M $ is f-projective if and only if it is flat and $\{ R \}$-Mittag-Leffler. 
		
		In particular, each countably generated f-projective module is countably presented, and hence of projective dimension $\leq 1$.
	\end{proposition}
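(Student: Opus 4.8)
The plan is to prove the two implications of the equivalence directly and then read off the countable-presentation statement from Proposition \ref{one}(v).

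For the forward implication, assume $M$ is f-projective. Flatness follows from Lemma \ref{lem4}: any homomorphism $\varphi : G \to M$ with $G$ finitely presented has finitely generated image, whose inclusion factors through a finitely generated free module, and precomposing with $\varphi$ shows $\varphi$ factors through a finitely generated free module; thus $M \in \varinjlim \mathcal{C}$ for $\mathcal{C}$ the class of finitely generated free modules, i.e.\ $M$ is flat. To see that $M$ is $\{R\}$-Mittag-Leffler I would check directly that the canonical map $M \otimes_R R^I \to M^I$ is injective for every set $I$. Take a kernel element, written as a finite sum $\sum_{k=1}^n m_k \otimes a_k$ with $a_k = (a_{k,i})_i \in R^I$; its vanishing in $M^I$ means $\sum_k m_k a_{k,i} = 0$ for all $i$. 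Let $C = \sum_k m_k R$ and factor the inclusion $C \hookrightarrow M$ as $C \xrightarrow{g} F \xrightarrow{h} M$ with $F$ finitely generated free. Since $C \hookrightarrow M$ is injective, each $\sum_k m_k a_{k,i}$ already vanishes in $C$, so the image of $\sum_k g(m_k) \otimes a_k$ in $F^I$ is zero; as the canonical map $F \otimes_R R^I \to F^I$ is an isomorphism for finitely generated free $F$, we get $\sum_k g(m_k) \otimes a_k = 0$ in $F \otimes_R R^I$. Applying $h \otimes \mathrm{id}$ returns the original element, which is therefore $0$.

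For the converse, assume $M$ is flat and $\{R\}$-Mittag-Leffler, and write $M = \varinjlim (F_\lambda, u_{\mu\lambda})$ as a direct limit of a directed system of finitely generated free modules with limit maps $u_\lambda : F_\lambda \to M$. The $\{R\}$-Mittag-Leffler condition translates (cf.\ \cite{AH}, \cite{GT}) into the Mittag-Leffler condition for the inverse system of duals $(\Hom R{F_\lambda}{R}, u_{\mu\lambda}^\ast)$: for each $\lambda$ there is $\mu \ge \lambda$ with $\im u_{\nu\lambda}^\ast = \im u_{\mu\lambda}^\ast$ for all $\nu \ge \mu$. Given a finitely generated $C = \sum_k x_k R$, choose $\lambda$ and $y_k \in F_\lambda$ with $u_\lambda(y_k) = x_k$, take a stabilization index $\mu$ for this $\lambda$, and define $g : C \to F_\mu$ by $x_k \mapsto u_{\mu\lambda}(y_k)$, so that $u_\mu \circ g$ agrees with the inclusion on generators. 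The main point — and the only place the hypothesis is genuinely used — is the well-definedness of $g$: if $\sum_k x_k r_k = 0$, then $y := \sum_k y_k r_k \in \Ker{u_\lambda} = \bigcup_\nu \Ker{u_{\nu\lambda}}$, so $u_{\nu\lambda}(y) = 0$ for some $\nu \ge \mu$; every coordinate functional $\phi_j \circ u_{\mu\lambda}$ of $u_{\mu\lambda}$ lies in $\im u_{\mu\lambda}^\ast = \im u_{\nu\lambda}^\ast$, hence factors as $\chi_j \circ u_{\nu\lambda}$ and vanishes on $y$; since $F_\mu$ is finitely generated free its coordinate functionals separate points, so $u_{\mu\lambda}(y) = 0$. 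Thus $g$ is a homomorphism and the inclusion factors as $C \xrightarrow{g} F_\mu \xrightarrow{u_\mu} M$, proving $M$ is f-projective. I expect this well-definedness step, resting on the duality between the stabilization of $\im u_{\mu\lambda}^\ast$ and the separation of points in the free module $F_\mu$, to be the crux of the whole argument.

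Finally, for the last assertion, let $M$ be countably generated and f-projective, hence flat and $\{R\}$-Mittag-Leffler with $R \in \mathcal Q = \{R\}$. Proposition \ref{one}(v) then supplies a system $\mathcal U$ of countably presented submodules of $M$ such that every countable subset of $M$ lies in some member of $\mathcal U$. Applying this to a countable generating set of $M$ produces $N \in \mathcal U$ with $N = M$, so $M$ is itself countably presented. Since a countably presented flat module has projective dimension $\le 1$ (cf.\ \cite{GT}), the final claim follows.
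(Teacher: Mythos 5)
Your proof is correct, but it takes a more self-contained route than the paper. The paper disposes of the equivalence in two citations: Goodearl's theorem \cite[Theorem 1]{G} identifies the flat $\{R\}$-Mittag-Leffler modules (equivalently, by Lemma \ref{lim-ML}, the flat $\mathcal F^\prime$-Mittag-Leffler modules for $\mathcal F^\prime$ the class of flat left modules) with the flat modules in which every finitely generated submodule's inclusion factors through a finitely presented module, and Lenzing's Lemma \ref{lem4} then upgrades ``finitely presented'' to ``finitely generated free''. What you have done is essentially reprove Goodearl's theorem from scratch: your forward direction is the elementary tensor computation reducing injectivity of $M \otimes_R R^I \to M^I$ to the finitely generated free case, and your converse is the classical Raynaud--Gruson duality argument (stabilization of images in the inverse system of duals $\Hom R{F_\lambda}{R}$, plus the fact that coordinate functionals separate points of a free module). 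Both directions check out, and the well-definedness step you single out is indeed exactly where the Mittag-Leffler hypothesis enters; the only external input you still need is the standard equivalence between the $\{R\}$-Mittag-Leffler property and the Mittag-Leffler condition on the dual inverse system, which you correctly attribute to \cite{AH} and \cite{GT}. For the final claim, the paper cites \cite[Corollary 5.3]{AH}, whereas you derive countable presentability internally from Proposition \ref{one}(v) (legitimate, since $R \in \{R\}$ makes (v) equivalent to (i)) and then invoke the standard fact that a countably presented flat module has projective dimension at most one. Your version costs more space but makes the proposition nearly independent of the literature; the paper's version is shorter but leans on Goodearl's and Angeleri H\"ugel--Herbera's results.
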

\begin{proof} Let $\mathcal F ^\prime$ denote the class of all flat left $R$-modules. By Lemma \ref{lim-ML} (or \cite[Theorem 1]{G}), $\mathcal D _{\{R \}} = \mathcal D _{\mathcal F ^\prime}$. By \cite[Theorem 1]{G}, for each module $M$, $M \in \mathcal D _{\mathcal F ^\prime}$, iff $M$ is flat and for each finitely generated submodule $F$ of $M$, the inclusion $F \hookrightarrow M$ factors through a finitely presented module. By Lemma \ref{lem4}, this is further equivalent to the f-projectivity of $M$.

The final claim follows from \cite[Corollary 5.3]{AH}.
\end{proof} 

We also note the following corollary of \cite[Theorem 5.1]{AH} (and Theorem \ref{one}):

\begin{corollary}\label{f-char}
Let $R$ be a ring and $M \in \rmod R$. Then $M$ is $f$-projective, if and only if $M$ possesses a system of submodules, $\mathcal U$, consisting of countably presented f-projective modules such that each countable subset of $M$ is contained in an element of $\mathcal U$ (and $\mathcal U$ is closed under unions of countable chains).
\end{corollary}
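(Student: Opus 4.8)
The plan is to recognize Corollary \ref{f-char} as essentially the specialization of Proposition \ref{one} to the case $\mathcal Q = \{ R \}$, combined with Proposition \ref{x1}. Since $R \in \mathcal Q$ holds trivially when $\mathcal Q = \{ R \}$, the full equivalence of (i) and (v) in Proposition \ref{one} is available, and by Proposition \ref{x1} the phrase ``flat $\{ R \}$-Mittag-Leffler module'' is interchangeable with ``f-projective module'' throughout. So the main task is a translation exercise, together with checking that the purity condition on the inclusions $N \hookrightarrow M$ that appears in condition (v) of Proposition \ref{one} can be dropped here, since it is automatic for $\mathcal Q = \{ R \}$.

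First I would apply Proposition \ref{one} with $\mathcal Q = \{ R \}$ and $\kappa = \card(R) + \aleph_0$. Since $R \in \mathcal Q$, conditions (i) and (v) are equivalent. Condition (i) says $M$ is flat and $\{ R \}$-Mittag-Leffler, which by Proposition \ref{x1} is exactly f-projectivity. Condition (v) furnishes a system $\mathcal U$ of countably presented flat $\{ R \}$-Mittag-Leffler submodules $N$ of $M$ such that each inclusion $N \hookrightarrow M$ remains injective after tensoring with $R$, satisfying the covering property (a) and the closure property (b). Again by Proposition \ref{x1}, each such $N$ is precisely a countably presented f-projective module, so the system $\mathcal U$ is exactly the kind described in the corollary.

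The one point requiring attention is the condition that $N \hookrightarrow M$ stays injective when tensored by $Q \in \mathcal Q$. For $\mathcal Q = \{ R \}$ the only relevant $Q$ is $R$ itself, and tensoring any inclusion by $R$ reproduces the inclusion, which is trivially injective. Hence this extra hypothesis in Proposition \ref{one}(v) is vacuous in the present setting and can be omitted from the statement of the corollary, which is why the corollary phrases the condition simply as a system of countably presented f-projective submodules covering all countable subsets and closed under countable chains. I would spell this out explicitly to justify the cleaner formulation.

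I do not expect a genuine obstacle here, as the corollary is a direct packaging of two already-established results. The only subtlety to state carefully is the direction of the equivalence: condition (v)$\Rightarrow$(i) of Proposition \ref{one} holds for every $\mathcal Q$ (so the existence of such a system $\mathcal U$ always forces f-projectivity), while the converse (i)$\Rightarrow$(v) uses $R \in \mathcal Q$ and thus relies essentially on the $\mathcal Q = \{ R \}$ hypothesis through the application of \cite[Theorem 5.1]{AH}. Writing both implications via Propositions \ref{one} and \ref{x1} completes the argument.
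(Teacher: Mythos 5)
Your proposal is correct and follows exactly the route the paper intends: the corollary is stated as a consequence of Proposition \ref{one} (the equivalence of (i) and (v), available since $R \in \{R\}$) together with the identification of flat $\{R\}$-Mittag-Leffler modules with f-projective modules from Proposition \ref{x1}, and your observation that tensoring an inclusion with $R$ is automatically injective correctly disposes of the extra condition in (v).
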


We will denote by $\stixcal{FP}$ the class of all f-projective modules. So $\stixcal{FP} = \mathcal D _{\{ R \}} = \mathcal D _{\mathcal F ^\prime}$. 

There is an interesting relation between f-projectivity and coherence. Here, we will call a module $M$ \textit{coherent}, if all its finitely generated submodules are finitely presented. (Thus a ring $R$ is right coherent, if the regular right module is coherent.) 

\begin{lemma}\label{cohf} Let $R$ be a ring.
\begin{enumerate}
\item Let $ M $ be a flat coherent module. Then $ M $ is f-projective.
\item The ring $ R $ is right coherent, iff $\stixcal{FP}$ coincides with the class of all flat coherent modules.
\end{enumerate}
\end{lemma}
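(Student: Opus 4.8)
The plan is to prove the two parts of Lemma \ref{cohf} as follows.

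\medskip

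\textbf{Part (i).} The goal is to show that a flat coherent module $M$ is f-projective. By Proposition \ref{x1} (or by the definition via Lemma \ref{lem4}), it suffices to show that for each finitely generated submodule $C$ of $M$, the inclusion $C \hookrightarrow M$ factors through a finitely presented module. First I would invoke coherence of $M$: since $M$ is coherent, the finitely generated submodule $C$ is in fact finitely presented. Now I have a homomorphism (the inclusion) from a finitely presented module $C$ into the flat module $M$. Since $M$ is flat, it lies in $\varinjlim \mathcal C$ where $\mathcal C$ is the class of finitely generated free modules, so by the (1) $\Leftarrow$ (2) direction of Lemma \ref{lem4}, every homomorphism from a finitely presented module into $M$ factors through a finitely generated free module. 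Applying this to the inclusion $C \hookrightarrow M$ yields the required factorisation, and hence $M$ is f-projective. The main point is simply that coherence upgrades ``finitely generated'' to ``finitely presented'', after which flatness supplies the factorisation for free.

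\medskip

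\textbf{Part (ii).} Here I must prove the equivalence: $R$ is right coherent iff $\stixcal{FP}$ equals the class of all flat coherent modules.

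For the forward implication, assume $R$ is right coherent. One inclusion is Part (i): every flat coherent module is f-projective. For the reverse inclusion, I would take an arbitrary f-projective module $M$ and show it is coherent, i.e.\ that each finitely generated submodule $C$ of $M$ is finitely presented. By f-projectivity the inclusion $C \hookrightarrow M$ factors through a finitely generated free module $F$ via $C \xrightarrow{g} F \xrightarrow{h} M$; since the composite is a monomorphism, $g$ is a monomorphism, so $C$ is isomorphic to a finitely generated submodule of $F$. Over a right coherent ring $R$, every finitely generated submodule of a finitely generated free (hence finitely presented) module is finitely presented, because right coherence is equivalent to the finitely generated submodules of finitely generated free modules being finitely presented. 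Thus $C$ is finitely presented, and $M$ is coherent; since every f-projective module is also flat by Proposition \ref{x1}, this gives $\stixcal{FP} \subseteq \{\text{flat coherent modules}\}$, completing the equality.

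For the converse, assume $\stixcal{FP}$ coincides with the class of all flat coherent modules; I must deduce that $R$ is right coherent, i.e.\ that $R_R$ is coherent. The regular module $R_R$ is projective, hence f-projective, so by the assumed equality $R_R$ lies in the class of flat coherent modules; in particular $R_R$ is coherent, which is precisely the definition of $R$ being right coherent. The step I expect to require the most care is the characterisation of right coherence used in the reverse inclusion of the forward implication, namely that over a right coherent ring all finitely generated submodules of finitely presented modules are again finitely presented; I would cite the standard treatment in \cite{GT} rather than reprove it.
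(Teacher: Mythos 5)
Your proposal is correct and follows essentially the same route as the paper's own proof: part (i) uses coherence to upgrade the finitely generated submodule to a finitely presented one and then applies Lenzing's Lemma \ref{lem4} to the presentation of $M$ as a direct limit of finitely generated free modules, and part (ii) reduces the only-if direction to showing that f-projectivity forces coherence (via the embedding of a finitely generated submodule into a finitely generated free module) while the if direction observes that $R_R$ is always f-projective. The only difference is that you spell out a couple of routine steps (e.g.\ that the first factor of the factorisation is monic) that the paper leaves implicit.
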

\begin{proof}
(i) Since $M$ is flat, $ M = \varinjlim F_i $ where the modules $ F_i $ are finitely generated and free. If $ C $ is a finitely generated submodule of $ M $, then $ C $ is finitely presented, so the inclusion $ C \subseteq M $ factors through some $ F_i $ by Lemma \ref{lem4}, whence $ M $ is f-projective.

(ii) In view of part (i) and Lemma \ref{lem4}, in order to prove the only-if part, we have to prove that f-projectivity implies coherence for any module $M$. Let $ F $ be be a finitely generated submodule of $ M $. By f-projectivity, $ F $ is a submodule of a finitely generated free module. Since $ R $ is right coherent, $ F $ is finitely presented.

The if part is clear, since the regular module $ R $ is always f-projective.
\end{proof}

Note that the situation simplifies for coherent domains:

\begin{lemma}
	Let $ R $ be a coherent domain. Then $ \stixcal{FP} = \mathcal{F} $.
\end{lemma}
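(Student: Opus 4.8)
The plan is to prove the two inclusions separately, the nontrivial one being $\mathcal F \subseteq \stixcal{FP}$, since $\stixcal{FP} \subseteq \mathcal F$ holds over any ring by Proposition \ref{x1}. For the inclusion $\mathcal F \subseteq \stixcal{FP}$ I would route everything through Lemma \ref{cohf}(i): a flat \emph{coherent} module is automatically f-projective. So it suffices to show that over a coherent domain \emph{every} flat module $M$ is coherent, i.e.\ that every finitely generated submodule of $M$ is finitely presented. Once this is established, each flat $M$ is flat coherent, hence f-projective by Lemma \ref{cohf}(i), giving $\mathcal F \subseteq \stixcal{FP}$ and therefore $\stixcal{FP} = \mathcal F$.

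The heart of the argument is thus the coherence of flat modules. First I would use that $R$ is a domain to note that any flat module $M$ is torsion-free. Now fix a finitely generated submodule $C \subseteq M$; then $C$ is a finitely generated torsion-free module. The key step is to embed $C$ into a free module of finite rank. Writing $K$ for the field of fractions of $R$, the canonical map $C \to C \otimes_R K$ is injective (its kernel being the torsion submodule of $C$), and $C \otimes_R K$ is a finite-dimensional $K$-vector space, say of dimension $d$, because $C$ is finitely generated. Identifying $C \otimes_R K \cong K^d$ and clearing the finitely many denominators of the images of a generating set of $C$ by a single nonzero $s \in R$, the map $c \mapsto s\,(c \otimes 1)$ embeds $C$ into $R^d$. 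Hence $C$ is isomorphic to a finitely generated submodule of a free module.

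It remains to invoke coherence of $R$: every finitely generated submodule of a (finitely generated, hence of an arbitrary) free module is finitely presented. Applied to the image of $C$ in $R^d$, this shows $C$ is finitely presented. As $C$ was an arbitrary finitely generated submodule of $M$, the module $M$ is coherent, completing the proof.

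The main obstacle I anticipate is the embedding of a finitely generated torsion-free module into a free module of finite rank; this is where both hypotheses are genuinely used, torsion-freeness (from flatness over a domain) to pass injectively into the $K$-vector space and to clear denominators, and coherence to convert ``finitely generated submodule of a free module'' into ``finitely presented.'' As an alternative bookkeeping, one could instead verify condition (i) of Theorem \ref{two} directly, proving that every Bass module (countably presented flat module) over a coherent domain is $\{R\}$-Mittag-Leffler by the same coherence argument applied to its finitely generated submodules and then appealing to Lemma \ref{cohf}(i); but the route through Lemma \ref{cohf}(i) for all flat modules is cleaner and avoids the detour through the equivalences of Theorem \ref{two}.
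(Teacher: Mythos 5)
Your proof is correct and follows essentially the same route as the paper: reduce to showing every flat module over a coherent domain is coherent, then apply Lemma \ref{cohf}(i). The only difference is that where the paper cites the Cartan--Eilenberg result that a finitely generated torsion-free module over a domain embeds into a finitely generated free module, you prove it directly by passing to the fraction field and clearing denominators, which is a correct and standard argument.
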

\begin{proof}
	Let $ M $ be a flat module. By Lemma \ref{cohf}(i), it suffices to prove that $ M $ is coherent. Let $ F $ be a finitely generated submodule of $ M $. Then $ M $ and $ F $ are torsion-free, so by a classic result of Cartan and Eilenberg \cite[16.1]{GT}, $ F $ embeds into a finitely generated free module. By the coherence of $ R $, $ F $ is finitely presented, proving that $ M $ is coherent.
\end{proof}

Further instances of the coincidence $ \stixcal{FP}$ with $\mathcal{F} $ (i.e., of the case when $\stixcal{FP}$ is a covering class, see Theorem \ref{two}) appear in part (i) of the following proposition:

	\begin{proposition}\label{observe} Let $R$ be a ring.
		\begin{enumerate}
			\item Assume that $ R $ is right noetherian or $R$ is right perfect. Then $ \stixcal{FP} = \mathcal{F} $ is a covering class.
 			\item If $ R $ is right non-singular, then all f-projective modules are non-singular.
			\item If $ R $ is von Neumann regular, then  $\stixcal{FP} = \mathcal F \mathcal M $. Hence $\stixcal{FP}$ is covering, only if $R$ is completely reducible.   
      \item Assume $ R $ is von Neumann regular and right self-injective. Then $ \stixcal{FP}$ coincides with the class of all non-singular modules.	
	  \end{enumerate}
 \end{proposition}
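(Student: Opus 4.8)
For part (i), the plan is to reduce both hypotheses to the coincidence $\stixcal{FP} = \mathcal F$, after which \cite{BEE} (equivalently, the implication (ii)~$\Rightarrow$~(iii) of Theorem~\ref{two}) gives that $\stixcal{FP}$ is covering. If $R$ is right noetherian, then every finitely generated submodule of an arbitrary module is finitely presented, so every module---in particular every flat module---is coherent; Lemma~\ref{cohf}(i) then yields $\mathcal F \subseteq \stixcal{FP}$, whence $\stixcal{FP} = \mathcal F$ since the reverse inclusion always holds. If instead $R$ is right perfect, then $\mathcal P = \mathcal F \mathcal M = \mathcal F$, and as $\mathcal F \mathcal M \subseteq \stixcal{FP} \subseteq \mathcal F$ we again get $\stixcal{FP} = \mathcal F$.

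For part (ii), I would argue directly from the factorisation definition of f-projectivity. Let $M$ be f-projective and let $m \in M$ lie in the singular submodule $Z(M)$, so that $\Ann{m}$ is essential in $R_R$; I must show $m = 0$. Applying Definition~\ref{f-proj} to the cyclic submodule $C = mR$, I factor the inclusion $C \hookrightarrow M$ as $C \xrightarrow{g} F \xrightarrow{h} M$ with $F$ finitely generated free. Since $g$ is a homomorphism, $\Ann{m} \subseteq \Ann{g(m)}$, so $\Ann{g(m)}$ is essential too, i.e.\ $g(m) \in Z(F)$. But $F \cong R^n$ and $R$ is right non-singular, so $Z(F) = 0$; hence $g(m) = 0$ and $m = h(g(m)) = 0$. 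Thus $Z(M) = 0$.

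For part (iii), I would exploit the identity $\stixcal{FP} = \mathcal D _{\mathcal F ^\prime}$ recorded before Lemma~\ref{cohf}, where $\mathcal F ^\prime$ is the class of flat left $R$-modules. When $R$ is von Neumann regular every left module is flat, so $\mathcal F ^\prime = \lmod R$ and therefore $\stixcal{FP} = \mathcal D _{\lmod R} = \mathcal F \mathcal M$. For the final assertion, Theorem~\ref{two} gives that $\stixcal{FP}$ is covering iff $\stixcal{FP} = \mathcal F$; combined with $\stixcal{FP} = \mathcal F \mathcal M$ this forces $\mathcal F \mathcal M = \mathcal F$, which by the dichotomy recalled in the Introduction (for non-right-perfect rings $\mathcal F \mathcal M \subsetneq \mathcal F$) holds only when $R$ is right perfect. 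Finally, a von Neumann regular ring has $J(R) = 0$, so right perfectness makes $R = R/J(R)$ semisimple, i.e.\ completely reducible.

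For part (iv), I combine (ii) and (iii). As von Neumann regular rings are right non-singular, (ii) gives the inclusion $\stixcal{FP} \subseteq \{\,\text{non-singular modules}\,\}$. For the converse I would show that over a right self-injective regular ring every finitely generated non-singular module is projective: writing such a module as $C = R^n/K$, non-singularity of $C$ forces $K$ to be closed in $R^n$, and since $R^n$ is injective (hence continuous) its closed submodules are direct summands, so $C$ is a direct summand of $R^n$ and thus projective. Given this, let $M$ be non-singular and $C \subseteq M$ finitely generated; then $C$ is non-singular, hence projective, hence a direct summand of some $R^n$ via a split monomorphism $s : C \to R^n$ with retraction $r$. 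For the inclusion $j : C \hookrightarrow M$, the map $h = jr : R^n \to M$ satisfies $hs = j$, so $j$ factors through $R^n$; as $C$ was arbitrary, $M$ is f-projective. This proves the reverse inclusion and hence the equality. The only genuinely non-formal ingredient in the whole proposition is this structural input in (iv)---that finitely generated non-singular modules over a right self-injective regular ring are projective; I expect it to be the main obstacle, everything else being a direct application of the coincidence and closure results established above.
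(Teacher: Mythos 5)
Your proposal is correct and follows essentially the same route as the paper in all four parts: (i) reduces both hypotheses to $\stixcal{FP}=\mathcal F$ and invokes the flat cover theorem, (ii) uses the factorisation in Definition~\ref{f-proj} to embed finitely generated submodules into free (hence non-singular) modules, (iii) uses that all left modules over a von Neumann regular ring are flat so that $\stixcal{FP}=\mathcal D_{\lmod R}=\mathcal F\mathcal M$, and (iv) combines (ii) with the projectivity of finitely generated non-singular modules. The only difference is that in (iv) you prove that structural fact directly (closed submodules of the injective module $R^n$ are summands), where the paper simply cites \cite[9.2]{G1}; your argument for it is sound.
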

		\begin{proof} (i) If $R$ is right noetherian, then each finitely generated module is finitely presented, so Lemma \ref{lem4} yields that all flat modules are f-projective. If $R$ is right perfect, then all relative Mittag-Leffler modules are projective (= flat). \\
	(ii) Let $M$ be an f-projective module. By Definition \ref{f-proj}, each finitely generated submodule $C$ of $M$ embeds into a finitely generated free module $F$. By assumption, $F$ is non-singular, hence so are $C$ and $M$. \\
	(iii) This is clear, since von Neumann regularity of $R$ is equivalent to the property that all left $R$-modules are flat. \\
	(iv) The non-singularity of all f-projective modules follows from (ii). Since $R$ is right self-injective, \cite[9.2]{G1} shows that all finitely generated submodules of non-singular modules are projective, hence all non-singular modules are f-projective by Definition \ref{f-proj}. Alternatively, we can use (iii) and the fact that under the assumptions of (iv), flat Mittag-Leffler modules coincide with the non-singular ones by \cite[6.8]{HT}.
		\end{proof}
	
For semihereditary rings, we have a fully ring theoretic characterization:

	\begin{proposition}\label{semi} 
		\begin{enumerate}
    \item The following conditions are equivalent for a ring $R$:
			\begin{enumerate}
			\item[{(1)}] $R$ is right semihereditary.
			\item[{(2)}] $\stixcal{FP}$ coincides with the class of all modules $M$ such that each finitely generated submodule of M is projective.
			\item[{(3)}] The class $\stixcal{FP}$ is closed under submodules.
			\end{enumerate}
		\item Assume $R$ is right semihereditary. Then the following conditions are equivalent:
			\begin{enumerate}
			\item[{(1)}] $\stixcal{FP}$ is a covering class.
			\item[{(2)}] Each finitely generated flat module is projective.
			\item[{(3)}] For each $n > 0$, the full matrix ring $M_n(R)$ contains no infinite sets of non-zero pairwise orthogonal idempotents.
			\end{enumerate}
		\end{enumerate}
	\end{proposition}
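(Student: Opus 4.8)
\emph{Part (1).} The plan is to run the cycle (1) $\Rightarrow$ (2) $\Rightarrow$ (3) $\Rightarrow$ (1). For (1) $\Rightarrow$ (2), if $M$ is f-projective and $C$ is a finitely generated submodule, then the factorization in Definition \ref{f-proj} embeds $C$ into a finitely generated free module, so $C$ is projective by right semihereditarity; conversely, if every finitely generated submodule $C$ of $M$ is projective, writing $C$ as a direct summand of a finitely generated free module $F$ lets the inclusion $C \hookrightarrow M$ factor through $F$, so $M$ is f-projective (this converse uses no hypothesis on $R$). The implication (2) $\Rightarrow$ (3) is immediate, since the class of modules all of whose finitely generated submodules are projective is visibly closed under submodules. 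For (3) $\Rightarrow$ (1): the regular module lies in $\stixcal{FP}$, so by closure under submodules every right ideal is f-projective; a finitely generated right ideal is then a finitely generated f-projective module, hence projective (as noted after Definition \ref{f-proj}), giving right semihereditarity.

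\emph{Part (2), (1) $\Leftrightarrow$ (2).} First I would record two consequences of right semihereditarity: $R$ is right coherent, and $R$ has weak global dimension $\le 1$ (each finitely generated right ideal is projective, hence flat, so each right ideal, being a directed union of these, is flat), whence every submodule of a flat module is flat. Then (1) $\Rightarrow$ (2): if $\stixcal{FP}$ is covering, Theorem \ref{two} gives $\stixcal{FP} = \mathcal F$, so a finitely generated flat module is f-projective, hence projective as it is finitely generated. Conversely (2) $\Rightarrow$ (1): if $M$ is flat and $C \le M$ is finitely generated, then $C$ is flat (weak dimension $\le 1$) and finitely generated, hence projective by (2); by Part (1) this forces $M$ to be f-projective, so $\mathcal F \subseteq \stixcal{FP}$ and $\stixcal{FP} = \mathcal F$ is covering by Theorem \ref{two}.

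\emph{Part (2), (2) $\Leftrightarrow$ (3).} The direction (2) $\Rightarrow$ (3) needs no hypothesis and I prove it contrapositively: from an infinite family $(e_i)_{i<\omega}$ of nonzero orthogonal idempotents in $M_n(R)$, the partial sums are idempotents, so $P := \bigoplus_{i<\omega} e_i R^n$ is a directed union of direct summands of $R^n$, hence pure in $R^n$; thus $R^n/P$ is finitely generated and flat, but not projective, since $P = gR^n$ for an idempotent $g$ would send the finitely many free generators into a single finite partial sum $\bigoplus_{i \le K} e_i R^n$, forcing $P \subseteq \bigoplus_{i\le K} e_i R^n$, which is absurd. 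For (3) $\Rightarrow$ (2) I again argue contrapositively: a finitely generated flat non-projective $M = R^n/K$ has $K$ pure (the quotient is flat) and not finitely generated (otherwise $M$ would be finitely presented and flat, hence projective). I would then build an infinite strictly ascending chain $D_1 \subsetneq D_2 \subsetneq \cdots$ of direct summands of $R^n$ inside $K$; such a chain produces an infinite orthogonal family in $M_n(R)$ (writing $D_{j+1} = D_j \oplus E_j$ with $E_j \ne 0$ and projecting onto the $E_j$, the resulting idempotents are pairwise orthogonal since $\operatorname{im}\epsilon_i \subseteq D_j$ is annihilated by $\epsilon_j$ whenever $i<j$, and symmetrically for $i>j$), contradicting (3).

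\emph{The chain, and the main obstacle.} The chain is constructed inductively on two facts. First, a finitely generated pure submodule $C$ of $R^n$ is automatically a direct summand, because $R^n/C$ is then finitely presented and flat, hence projective, so the sequence splits. Second, passing to $R^n/D_j$ (in which $K/D_j$ is again nonzero and pure, as $D_j$ is a summand and $K$ is not finitely generated), the inductive step reduces to the following Key Lemma: \emph{a nonzero pure submodule of a finitely generated projective module contains a nonzero direct summand.} Granting it, $D_{j+1}$ is the pullback of such a summand of $R^n/D_j$ lying in $K/D_j$, and the process cannot terminate since $K$ is not finitely generated. The main obstacle is exactly the Key Lemma: producing a nonzero idempotent $e \in M_n(R)$ with $0 \ne eR^n \subseteq K$ out of a nonzero $x \in K$. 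Villamayor's flatness criterion supplies a retraction $\theta : R^n \to K$ with $\theta(x) = x$, but upgrading this to an honest idempotent with image in $K$ is delicate, and this is where I expect right semihereditarity (finitely generated submodules of $R^n$ are projective) to be used decisively; should a direct summand fail to sit inside $K$ itself, the fallback is to extract the orthogonal idempotents from the idempotent, non-finitely-generated trace ideal of $M$, or to invoke the known ring-theoretic characterization of the rings over which finitely generated flat modules are projective.
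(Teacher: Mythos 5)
Your Part (i) and the equivalence (1) $\Leftrightarrow$ (2) in Part (ii) follow the paper's route essentially verbatim (the paper condenses Part (i) by quoting Lam's characterization of semihereditary rings, but the content is the same), and your (2) $\Rightarrow$ (3) is the paper's argument stripped of Morita equivalence: the paper forms the cyclic flat non-projective module $S/\bigoplus_{i<\omega} e_iS$ over $S = M_n(R)$ and transports it back to $R$, whereas you work directly with $R^n/\bigoplus_{i<\omega} e_iR^n$; both are correct, and yours is marginally more self-contained. (One small point there: when you split off the idempotents $\epsilon_j$ from the chain $D_1 \subsetneq D_2 \subsetneq \cdots$, orthogonality in both directions requires choosing the complements coherently, i.e.\ arranging $R^n = D_1 \oplus E_1 \oplus \cdots \oplus E_j \oplus C_{j+1}$ at each stage; as written, "symmetrically for $i>j$" does not quite follow for arbitrary choices of the projections.)

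The genuine gap is in (3) $\Rightarrow$ (2). Your construction of the ascending chain of direct summands inside the pure, non-finitely-generated kernel $K$ hinges on the Key Lemma that a nonzero pure submodule of a finitely generated projective module contains a nonzero direct summand, and you do not prove it; producing an idempotent $e \in M_n(R)$ with $0 \neq eR^n \subseteq K$ from the Villamayor retraction is exactly the hard step, and it is not clear that semihereditarity alone rescues it. The paper does not attempt this either: it simply cites Puninski--Rothmaler \cite[Proposition 4.10]{PR} for (3) $\Rightarrow$ (2), which is precisely the last of your stated fallbacks. So your argument becomes complete (and coincides with the paper's) once you invoke that reference; attempting to prove the Key Lemma from scratch would essentially amount to reproving the Puninski--Rothmaler result, and as it stands that step is missing.
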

		\begin{proof} (i) By \cite[Theorem 2.29]{L}, $R$ is right semihereditary, iff all finitely generated submodules of projective modules are projective. So the implication (1) implies (2) is immediate from Definition \ref{f-proj}, (2) implies (3) is trivial, and (3) implies (1) because projective modules are f-projective, and finitely generated f-projective modules are projective. 
		
		(ii) The implication (1) $ \Rightarrow $ (2) holds for any ring: By Theorem \ref{two}, (1) implies $\stixcal{FP} = \mathcal{F}$, so each finitely generated flat module is f-projective, hence projective, and (2) holds. If $R$ is right semihereditary, then $R$ has flat dimension $\leq 1$, i.e., submodules of flat modules are flat. So if each finitely generated flat module is projective, then by part (i), each flat module is f-projective. This proves (2) $ \Rightarrow $ (1).   
		
		Assume (2) holds, and there is an $n > 0$ such that the full matrix ring $S = M_n(R)$ contains an infinite set $\{ e_i \mid i < \omega \}$ of non-zero pairwise orthogonal idempotents. Then $M = S/\oplus \sum_{i < \omega} e_iS$ is a direct limit of the projective modules $S/\oplus \sum_{i \in X} e_iS$, where $X$ runs over all finite subsets of $\omega$. So $M$ is a cyclic flat right $S$-module which is not projective. Further, $S$ is Morita equivalent to $R$. If $(F,G)$ is a pair functors realizing this equivalence, then $F(M)$ is a finitely generated non-projective flat module, in contradiction with (2). 
		
		The implication (3) $ \Rightarrow $ (2) is proved e.g. in \cite[Proposition 4.10]{PR}.
	\end{proof}

On the one hand, if $ \stixcal{FP} \subsetneq \mathcal{F} $, then there is always a finitely generated projective module $M \in \stixcal{FP}$ such that $\varinjlim \add M \nsubseteq \stixcal{FP}$ (just take $M = R$). On the other hand, we have the following result that applies, e.g., to any simple projective module $M$: 

\begin{proposition}\label{closed} Let $R$ be a ring and $M$ be an f-projective module. Let $S = \End M$. Assume that $S$ is right noetherian and $M$ is a flat left $S$-module. Then $\varinjlim \add M \subseteq \stixcal{FP}$. 
\end{proposition}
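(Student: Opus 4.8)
The plan is to first reduce $\varinjlim\add M$ to a tensor description, after which flatness is automatic and, by Proposition \ref{x1}, only the $\{R\}$-Mittag-Leffler property needs genuine work. Since $M$ is an $(S,R)$-bimodule, I would use that $\Hom{R}{M}{-}$ and $-\otimes_S M$ restrict to mutually inverse equivalences between $\add M$ and the category of finitely generated projective right $S$-modules. Hence, for a direct system $(N_l)$ in $\add M$ with $N=\varinjlim N_l$, the right $S$-module $F=\varinjlim \Hom{R}{M}{N_l}$ is a direct limit of finitely generated projectives, so it is flat by Lazard's theorem, and $F\otimes_S M\cong\varinjlim(\Hom{R}{M}{N_l}\otimes_S M)\cong\varinjlim N_l=N$ because $-\otimes_S M$ commutes with direct limits. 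Thus every $N\in\varinjlim\add M$ has the form $N=F\otimes_S M$ with $F$ flat over $S$. As each $N_l\in\add M$ is flat over $R$, $N$ is flat, and by Proposition \ref{x1} it remains to show that $N$ is $\{R\}$-Mittag-Leffler, i.e.\ that the canonical map $c\colon N\otimes_R R^I\to N^I$ is injective for every set $I$.

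Next I would factor $c$. By associativity, $N\otimes_R R^I=F\otimes_S(M\otimes_R R^I)$. The canonical map $\mu\colon M\otimes_R R^I\to M^I$ is a monomorphism of left $S$-modules, since $M$ is f-projective, hence $\{R\}$-Mittag-Leffler; because $F$ is flat over $S$, the exact functor $F\otimes_S-$ keeps $F\otimes_S\mu$ injective. A check on elementary tensors shows $c=\nu\circ(F\otimes_S\mu)$, where $\nu\colon F\otimes_S M^I\to(F\otimes_S M)^I$ is the canonical comparison map. So everything reduces to the injectivity of $\nu$.

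The hard part is this last step, and it is where both hypotheses enter. Writing $F=\varinjlim_i P_i$ with $P_i$ finitely generated projective over $S$, structure maps $\mathfrak p_{ji}\colon P_i\to P_j$ and $\mathfrak p_i\colon P_i\to F$, and putting $X_i=P_i\otimes_S M$, the natural isomorphisms $P_i\otimes_S M^I\cong X_i^I$ (valid because finitely generated projectives commute with products) identify $\nu$ with the comparison map $\theta\colon\varinjlim_i X_i^I\to(\varinjlim_i X_i)^I$. Injectivity of $\theta$ is a uniform-vanishing statement: if $(x_\alpha)_{\alpha\in I}\in X_i^I$ has every component mapping to $0$ in $N=\varinjlim_i X_i$, then the components must all be annihilated by a single structure map $X_i\to X_j$. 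To obtain this, note that $\ker\mathfrak p_i=\bigcup_{j\ge i}\ker\mathfrak p_{ji}$ is a directed union inside the finitely generated $S$-module $P_i$; as $S$ is right noetherian, $\ker\mathfrak p_i$ is itself finitely generated, so $\ker\mathfrak p_i=\ker\mathfrak p_{j_0i}$ for some $j_0\ge i$. Because $M$ is flat over $S$, tensoring the relevant inclusions by $M$ gives $\ker(X_i\to N)=(\ker\mathfrak p_i)\otimes_S M=(\ker\mathfrak p_{j_0i})\otimes_S M=\ker(X_i\to X_{j_0})$. Hence any element of $X_i$ eventually mapping to $0$ in $N$ is already killed by $X_i\to X_{j_0}$; applying this to every $x_\alpha$ yields the required uniform vanishing, so $\theta$, and therefore $\nu$ and $c$, is injective and $N$ is f-projective.

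I expect the genuine obstacle to be exactly this interplay in the third step: converting the hypothesis that $S$ is right noetherian into finite generation of $\ker\mathfrak p_i$, and then transporting the resulting stabilization of the directed union of kernels through $-\otimes_S M$ by flatness of $M$ over $S$. The equivalence used in the first step and the verification that $c=\nu\circ(F\otimes_S\mu)$ are routine and I would relegate them to short diagram chases on elementary tensors.
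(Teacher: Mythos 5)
Your proposal is correct, and its skeleton coincides with the paper's: reduce to modules of the form $F\otimes_S M$ with $F$ a flat right $S$-module, factor the canonical map $(F\otimes_S M)\otimes_R R^I\to (F\otimes_S M)^I$ through $F\otimes_S M^I$, kill the first factor using flatness of $F$ over $S$ together with the $\{R\}$-Mittag-Leffler property of $M$, and then handle the comparison map $\nu\colon F\otimes_S M^I\to(F\otimes_S M)^I$. The differences are in how the first and last steps are justified. For the reduction, the paper simply quotes \cite[Theorem 2.5]{PT} for the equality $\varinjlim\add M=\{F\otimes_S M\mid F\text{ flat over }S\}$, whereas you derive the inclusion you actually need from the equivalence $\add M\simeq\add S_S$; that is a legitimate shortcut. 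For the injectivity of $\nu$, the paper argues at the level of the already-established theory: $S$ right noetherian makes $F$ f-projective by Proposition \ref{observe}(i), hence $\mathcal F'$-Mittag-Leffler by Proposition \ref{x1}, and since $M$ is flat over $S$ one may take all $Q_i=M$ in the defining monomorphism. You instead reprove this special case from scratch, writing $F=\varinjlim P_i$ with $P_i$ finitely generated projective, identifying $\nu$ with $\varinjlim X_i^I\to(\varinjlim X_i)^I$, and extracting the uniform vanishing from the noetherian stabilization $\ker\mathfrak p_i=\ker\mathfrak p_{j_0i}$ transported through $-\otimes_S M$ by flatness of $M$ over $S$. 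This is exactly the mechanism hidden inside the cited propositions, so your version buys self-containedness (and makes visible where each hypothesis on $S$ and $M$ enters) at the cost of redoing an argument the paper already has available; both are sound.
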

\begin{proof} By \cite[Theorem 2.5]{PT}, $\varinjlim \add M = \{ F \otimes _S M \mid F \hbox{ a flat right $S$-module }\}$. So we have to prove that $F \otimes _S M$ is a flat and $\{ R \}$-Mittag-Leffler module, for each flat right $S$-module $F$. Flatness of $F \otimes _S M$ is clear since $M$ is f-projective, hence flat.

Let $I$ be a set. By assumption, the canonical map $M \otimes _R R^I \to M ^I$ is an injective homomorphism of left $S$-modules, whence the map 
$(F \otimes _S M) \otimes _R R^I \to F \otimes _S M ^I$ is monic. Since $S$ is right noetherian, $F$ is an f-projective right $S$-module by Proposition \ref{observe}(i). Since $M$ is a flat left $S$-module, the canonical map $F \otimes _S M ^I \to (F \otimes _S M)^I$ is monic by Proposition \ref{x1}. Thus, the module $F \otimes _S M$ is $\{ R \}$-Mittag-Leffler.                  
\end{proof}

\end{document}